\newdimen\plusheight
\def\+{\;\lower\plusheight\hbox{$+$}\;}
\newdimen\minusheight
\def\-{\;\lower\minusheight\hbox{$-$}\;}
\newdimen\cdotsheight
\def\cds{\lower\cdotsheight\hbox{$\cdots$}}
\numberwithin{equation}{section}
\theoremstyle{plain}
\newtheorem{theorem}{Theorem}[section]
\newtheorem{lemma}{Lemma}[section]
\newtheorem{corollary}{Corollary}[section]
\newtheorem{definition}{Definition}[section]
\newtheorem{example}{Example}[section]
\newtheorem{remark}{Remark}[section]
\newtheorem{note}{Note}[section]
  \newenvironment{nouppercase}{%
   \renewcommand{\uppercasenonmath}[1]{}}{}
	 \newcommand{\Keywords}[1]{\par\noindent
   {\small{Keywords}: #1}}
   \newcommand{\AMS}[1]{\par\noindent
   {\small{AMS Subject Classification (2010)}: #1}}
\begin{document}

\title{$ Ig^*$-CLOSED SETS AND ITS CERTAIN PROPERTIES IN $ \sigma $-SPACES WITH RESPECT TO AN IDEAL}
 % \author{Amar Kumar Banerjee$^1$}
 %\author{Jagannath Pal$^2$}
 %\address{1. Department of Mathematics,The University of Burdwan,Golapbag,Burdwan-713104,West Bengal,India}
 %\email{akbanerjee1971@gmail.com}
 %\address{2. Department of Mathematics,The University of Burdwan,Golapbag,Burdwan-713104,West Bengal,India}
 %\email{jpalbu1950@gmail.com}

 \author{Jagannath Pal$^1$}
 \author{Amar Kumar Banerjee$^2$}
 \newcommand{\acr}{\newline\indent}
 \maketitle
 \address{{1\,} Department of Mathematics, The University of Burdwan, Golapbag, East Burdwan-713104,
 West Bengal, India.
 Email: jpalbu1950@gmail.com\acr
 {2\,} Department of Mathematics, The University of Burdwan, Golapbag, East Burdwan-713104,
 West Bengal, India. Email:akbanerjee@math.buruniv.ac.in, akbanerjee1971@gmail.com 
 \\}
   
\begin{abstract}
Here we have introduced and studied the idea of $ Ig^*$-closed set with respect to an ideal and investigated some of its properties in  Alexandroff spaces \cite{AD}. We have also introduced $ Ig^*$-$T_0 $ axiom,  $ Ig^*$-$T_1$ axiom, $ Ig^*$-$T_\omega $ axiom and explored a relation among them.
\end{abstract}

\begin{nouppercase}
\maketitle
\end{nouppercase}

\let\thefootnote\relax\footnotetext{
\AMS{Primary 54A05, 54D10}
\Keywords {$ Ig^* $-closed sets, $ Ig^*$-$T_0 $ axiom,  $ Ig^*$-$T_1$ axiom, $ Ig^*$-$T_\omega $ axiom.}

}

\section{\bf Introduction}
\label{sec:int}
In (1940), A. D. Alexandroff \cite{AD} generalized topological space to a $ \sigma $-space (Alexandroff space) by changing only arbitrary union condition where countable union of open sets were taken to be open. Simultaneously, several authors introduced generalized closed sets and they contributed lot of works on generalized closed sets on several spaces \cite{AJ1, AC, CA, DP, WD, SM, MS} etc. to develop the modification. The idea of generalized closed sets in a topological space was given by Levine \cite{NL}. In 2003, P.  Das et al obtained a generalization of closed sets in Alexandroff spaces which was called $ g^* $-closed sets. They investigated various properties of it and introduced a new separation axiom namely $ T_\omega $ axiom in a $ \sigma $-space in the same way as that of $ T_\frac{1}{2} $-spaces introduced by Levine \cite{NL}  in a topological space. Introduction of ideals in topological spaces added new dimension to the subject and many topologists have been working with it for a long time on different areas of general topology \cite{AA, AAB, AP, IV, JH,JR, BP, DJ, SV} on different spaces.    

In this paper, we have studied the notion of generalized closed sets in Alexandroff spaces with respect to an ideal by introducing $ Ig^*$-closed set and  investigated the effects of topological properties with the newly introduced generalized closed set. Also we have obtained $ Ig^*$-$T_0 $ axiom,  $ Ig^*$-$T_1$ axiom, $ Ig^*$-$T_\omega $ axiom in an Alexandroff space and tried to establish a relation among them. Examples are offered where necessary to enrich the paper.

 \section{\bf Preliminaries}
 \label{sec:pre}

\begin{definition}\label{1}
\cite{AD}: An Alexandroff space (or $ \sigma $-space, briefly space) is a set $ Y $ together with a system  $\mathcal{F}$ of subsets satisfying the following axioms

(1)	the intersection of a countable number of sets from $\mathcal{F}$ is a set in $\mathcal{F}$

(2) the union of a finite number of sets from $\mathcal{F}$ is a set in $\mathcal{F}$

(3)	$ \emptyset $ and $ Y $ are in $ \mathcal{F} $

Members of $ \mathcal{F}$ are called closed sets and their complements are open sets. Clearly  we can take open sets in lieu of closed sets in the definition of $ \sigma $-space subject to the conditions of countable summability, finite intersectability, and the condition that $ Y $ and $ \emptyset $ should be open. The collection of all such open sets is denoted by $\eta $ and the space by    $(Y,\eta)$. When there is no confusion, the space  $(Y , \eta)$ will simply be denoted by $ Y $.
\end{definition}

It can be easily verified that a topological space is a space but in general $ \eta $ is not a topology. Several examples of spaces are seen in \cite{PD}, \cite{DP}, \cite{LD}. Definitions of closure, interior of a set and $ T_0,  T_1 $ axioms in a space are taken parallel to those of a topological space. Obviously, closure of a set $ D $ in a space, denoted by $ \overline{D} $, may not be closed in general.

\begin{definition} \cite{BP}:  If $ Y $ is a nonvoid set then a family of sets $ I\subset 2^X $ is an ideal if

(i) $ \emptyset \in I $, 

(ii) $ C, D\in I $ implies $ C\cup D\in I $ and 

(iii) $ C\in I, D\subset C $ implies $  D\in I $.
\end{definition} 

If $ (Y,\mu) $ is a topological space then the triplet $ (Y,\mu, I) $ is called an ideal topological space.

\begin{definition}\cite{JH}\label{2}:
If $ (Y,\eta) $ is a $ \sigma $-space and $ I $ is an ideal on $ Y $, the triplet $ (Y,\eta, I ) $ is called ideal space (or, ideal $ \sigma $-space).  
\end{definition}

\begin{remark}
In many papers where the idea of ideal convergence were studied elaborately, it was conventionally taken that the ideal is trivial if (i) $ I=\{\emptyset\} $ or, (ii) if the whole set belongs to $ I $. But in our present discussion we take only the first condition.
\end{remark}

\begin{definition}\label{3}
\cite{WD} :  Two sets $ C, D $ of $ Y $ are said to be weakly separated if there are two open sets $ E, F $ such that $C\subset E, D\subset F $ and $ C\cap F=D\cap E=\emptyset $.
\end{definition}

\begin{definition}\label{4}
\cite{NL}: A set $C$ in a topological space is said to be generalized closed ($g$-closed for short) if  $\overline {C} \subset E$ whenever $C\subset E$ and $ E $ is open.
\end{definition}

\begin{definition}\label{5}
\cite {DP}:  A set $ C $ is said to be  $g^*$-closed  in a space $ (Y, \eta) $ if there is a closed set  $ P $ containing $ C $ such that $ P \subset E $ whenever $C\subset E $ and  $  E $ is open.  $A$ is called $ g^*$-open if $ Y - C $ is $ g^*$-closed.
\end{definition}

\begin{definition}\cite{MS}\label{6}:
Let $ D $ be a subset of a space $ (Y, \eta) $. We define kernel of $ D $ denoted by $ ker(D)=\bigcap \{U: U\supset D; U $ is open\}. 
\end{definition}

\begin{lemma}\label{7}
Let $ C\subset Y $, then $ Int(C)=Y-\overline{Y-C} $. 
\end{lemma}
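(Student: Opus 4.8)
The plan is to establish the asserted set equality by a chain of logical equivalences based on the pointwise characterizations of interior and closure, which in a $\sigma$-space are taken parallel to the topological case. First I would record these descriptions explicitly: $y\in Int(C)$ precisely when there is an open set $U$ with $y\in U\subset C$, and $y\in\overline{Y-C}$ precisely when every open set containing $y$ meets $Y-C$. It is worth stating these membership criteria up front, because in a $\sigma$-space an arbitrary union of open sets (respectively an arbitrary intersection of closed sets) need not be open (respectively closed); hence one should argue from the pointwise condition rather than from ``the largest open subset'' or ``the smallest closed superset,'' and this is where a little care is required.

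Next I would pass to the complement. A point $y$ lies in $Y-\overline{Y-C}$ exactly when $y\notin\overline{Y-C}$, that is, when it is not the case that every open neighbourhood of $y$ meets $Y-C$. Negating this statement gives: there exists an open set $U$ with $y\in U$ and $U\cap(Y-C)=\emptyset$. The elementary set identity $U\cap(Y-C)=\emptyset \iff U\subset C$ then rewrites this as: there exists an open set $U$ with $y\in U\subset C$, which is exactly the defining condition for $y\in Int(C)$.

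Chaining these equivalences shows $y\in Int(C)\iff y\in Y-\overline{Y-C}$ for every $y\in Y$, and therefore $Int(C)=Y-\overline{Y-C}$, as claimed. No further structural property of the space is invoked, so the argument is valid verbatim in a $\sigma$-space.

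The main obstacle here is not computational but definitional: one must be sure the pointwise membership criteria used for $Int$ and $\overline{\,\cdot\,}$ are precisely those adopted for $\sigma$-spaces, since the usual topological shortcuts relying on preservation of openness under arbitrary unions are unavailable. Once these criteria are fixed, the proof reduces to a routine De Morgan style negation.
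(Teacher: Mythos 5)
Your proof is correct: the pointwise De Morgan argument goes through verbatim in a $\sigma$-space, and your caution about arguing from the membership criteria (rather than from a ``largest open subset,'' which need not be open here) is exactly the right point of care. The paper states Lemma \ref{7} without proof, treating it as routine, and your argument is precisely the standard verification it implicitly relies on.
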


Clearly every closed set is $g^*$-closed and every open set is $ g^* $-open but converse may not be true in general as it can be verified from Example 1 \cite{DP}. Also union of two $ g^* $-closed sets is $ g^* $-closed  \cite{DP} and so intersection of two $ g^* $-open sets is $ g^* $-open.

\section{\bf    $\textit Ig^
*$-closed sets in an ideal $ \sigma $-space}

In this section we study the idea of $ Ig^* $-closed set, a generalization of closed sets with respect to an ideal and investigate some of its topological properties in the setting of a $ \sigma $-space.

\begin{definition}(c.f.\cite{JR})\label{8}:
A subset $ C $ in an ideal $ \sigma $space $ (Y, \eta,I) $ is said to be an $ Ig^* $-closed if there is a closed set  $ P $ containing $ C $ such that $ P-E\in I $ whenever $ C\subset E $ and  $  E $ is open.  $ C $ is called an $ Ig^*$-open set if $ Y - C $ is $ Ig^*$-closed. 
\end{definition}

\begin{remark}\label{9}
Obviously in a $ \sigma $-space, a closed set is $ Ig^*$-closed and a $ g^* $-closed set is also $ Ig^*$-closed  but converse may not be always true as shown by Examples \ref{10}. Analogously, an open set is $ Ig^*$-open and a $ g^* $-open set is also $ Ig^*$-open.
\end{remark}

\begin{example}\label{10} (i):
Example of an $ Ig^* $-closed set which is not closed in an ideal $ \sigma $-space.

Let $ Y=R-Q, \eta=\{\emptyset,Y,D_i\} $ where $ D_i $ are all countable subsets of $ Y $ and $ I=\{\emptyset, \{\sqrt{7}\}\} $. Then $ (Y,\eta,I) $ is an ideal $ \sigma $-space. Assume $ D $ is the set of all irrational numbers of $ (1,2) $, then $ D $ is not closed but $ Ig^* $-closed since $ Y $ is the only open set containing $ D $. 

(ii): Example of an $ Ig^* $-closed set which is not $ g^* $-closed in an ideal $ \sigma $-space.

Let $ Y=R-Q, \eta=\{\emptyset, Y, \{\sqrt{3}\}\cup D_i\} $ where $ D_i $ are all countable subsets of $ Y $ and $ I =\{\emptyset $, all subsets of $ Y $ which do not contain $ \sqrt{3} $ \}. Then $ (Y,\eta,I) $ is an ideal $ \sigma $-space. Assume $ D=\{\sqrt{3}, \sqrt{7}\} $. Since $ Y $ is the only closed set containing $ D $, then $ D $ is $ Ig^* $-closed as $ Y-U\in I $ for any open set $ U, U\supset D $ but $ D $ is not a $ g^* $-closed set by Definition \ref{5}.  
\end{example}

\begin{theorem}\label{11}
Let $ I $ be a trivial ideal, then a set $ D\subset (Y,\eta,I) $ is $ Ig^* $-closed if and only if there is a closed set $ P, P\supset D $ such that $ P-ker(D)\in I $.
\end{theorem}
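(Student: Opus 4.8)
The plan is to unwind both definitions and exploit the fact that, for the trivial ideal $I=\{\emptyset\}$, the membership statement ``$\cdots\in I$'' is equivalent to ``$\cdots=\emptyset$''. This single observation converts every set-difference condition appearing in Definition \ref{8} and in the theorem into a plain inclusion, after which the equivalence becomes a matter of matching quantifiers against the intersection that defines $ker(D)$ in Definition \ref{6}.

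First I would establish the forward implication. Assume $D$ is $Ig^*$-closed. By Definition \ref{8} there is a closed set $P\supset D$ with $P-E\in I$ for every open $E$ satisfying $D\subset E$. Since $I=\{\emptyset\}$, each such difference is forced to be empty, i.e. $P\subset E$ for every open $E\supset D$. Intersecting over all such $E$ and recalling $ker(D)=\bigcap\{U:U\supset D,\ U\text{ open}\}$, I obtain $P\subset ker(D)$, and therefore $P-ker(D)=\emptyset\in I$, as required.

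For the converse I would assume the existence of a closed set $P\supset D$ with $P-ker(D)\in I$; triviality of $I$ again forces $P-ker(D)=\emptyset$, that is, $P\subset ker(D)$. Then, given an arbitrary open set $E$ with $D\subset E$, this $E$ is one of the members of the family whose intersection defines $ker(D)$, so $ker(D)\subset E$ and hence $P\subset ker(D)\subset E$, giving $P-E=\emptyset\in I$. Since this holds for every open $E\supset D$, Definition \ref{8} certifies that $D$ is $Ig^*$-closed.

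I do not expect a genuine obstacle: the argument rests entirely on the reading ``belongs to $I$ means is empty'' under the trivial ideal, combined with the elementary fact that $P$ is contained in every open superset of $D$ precisely when $P\subset ker(D)$. The only point meriting care is aligning the quantifier ``whenever $C\subset E$ and $E$ is open'' from Definition \ref{8} with the defining intersection of $ker(D)$ in both directions, so that no open superset of $D$ is inadvertently omitted when passing between the pointwise inclusions and the single inclusion $P\subset ker(D)$.
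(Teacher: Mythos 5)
Your proof is correct and follows essentially the same route as the paper's: use the triviality of $I$ to turn ``$P-E\in I$'' into ``$P\subset E$'' for every open $E\supset D$, and then pass between that family of inclusions and the single inclusion $P\subset ker(D)$ via the defining intersection of the kernel. In fact your write-up makes explicit the quantifier bookkeeping that the paper's very terse proof leaves implicit, so nothing is missing.
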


\begin{proof}
Let $ I $ be a trivial ideal and $ D $ be an  $ Ig^* $-closed set of $ (Y,\eta,I) $. Then there is a closed set  $ P $ containing $ D $ such that $ P-E\in I=\{\emptyset\} $ whenever $ D\subset E $ and  $  E $ is any open set which implies that $ P-ker(D)\in I=\{\emptyset\} $.

Conversely, suppose the conditions hold and take any open set $ E $ containing $ D $. Then $ P-E\in I $. This implies that $ D $ is an  $ Ig^* $-closed set.
\end{proof}

\begin{corollary}\label{12}
Let $ I $ be a trivial ideal; then a set $ D $ of $ (Y,\eta,I) $ satisfying $ D=ker(D) $ is $ Ig^* $-closed if and only if $ D $ is closed.
  
Proof is simple and so is omitted.
\end{corollary}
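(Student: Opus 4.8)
The plan is to lean entirely on Theorem \ref{11}, which already characterizes $Ig^*$-closedness when the ideal is trivial; the corollary is just the special case $D = ker(D)$. The one preliminary observation I would record is that, by the convention fixed in the Remark following Definition \ref{2}, a trivial ideal means $I = \{\emptyset\}$, so the membership condition $P - ker(D) \in I$ appearing in Theorem \ref{11} is equivalent to the equality $P - ker(D) = \emptyset$, i.e. $P \subset ker(D)$. This is what makes the argument short.

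For the forward implication I would assume $D$ is $Ig^*$-closed and apply Theorem \ref{11} to produce a closed set $P$ with $D \subset P$ and $P - ker(D) \in I$. By the observation above this gives $P \subset ker(D)$, and the hypothesis $D = ker(D)$ upgrades it to $P \subset D$. Together with $D \subset P$ this forces $P = D$, whence $D$, being equal to a closed set, is closed.

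For the converse, the cleanest route is simply to cite Remark \ref{9}: a closed set is automatically $Ig^*$-closed, so no extra hypothesis is needed. If instead I wanted to stay inside Theorem \ref{11}, I would take $P = D$ itself, for which $P - ker(D) = D - ker(D) = \emptyset \in I$ precisely because $D = ker(D)$.

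I do not expect a genuine obstacle here. The only point that needs a moment's care is that $ker(D) \supset D$ always holds, so that the hypothesis $D = ker(D)$ is exactly the ingredient that lets me squeeze $P$ between $D$ and $ker(D) = D$; everything else is a direct translation of the triviality of the ideal into a set equality. This is why the authors declare the proof simple and omit it.
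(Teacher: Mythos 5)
Your proof is correct and takes exactly the route the authors intended: the paper omits the proof as ``simple'' because the corollary is the immediate specialization of Theorem \ref{11} to a trivial ideal, where $P-ker(D)\in I=\{\emptyset\}$ forces $P\subset ker(D)=D$ and hence $P=D$ is closed. Your converse via Remark \ref{9} (or equivalently taking $P=D$ in Theorem \ref{11}) is likewise the standard step, so there is nothing to add or correct.
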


\begin{theorem}\label{13}
An open $ Ig^*$-closed set $ C\subset (Y,\eta, I) $ is closed  if the ideal is trivial.
\end{theorem}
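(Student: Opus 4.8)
The plan is to exploit the fact that once the ideal is trivial, namely $I=\{\emptyset\}$, the membership condition $P-E\in I$ appearing in Definition \ref{8} collapses to the set equality $P-E=\emptyset$, which is the same as the inclusion $P\subset E$. Thus for a trivial ideal an $Ig^*$-closed set $C$ is exactly a set admitting a closed superset $P$ (with $C\subset P$) such that $P\subset E$ holds for \emph{every} open set $E$ containing $C$. I would record this reformulation first, since it is the whole engine of the argument.

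Next I would invoke the $Ig^*$-closedness of $C$ to fix such a closed set $P$ with $C\subset P$, and note that $P\subset E$ for every open $E\supset C$. The crucial step is to use the standing hypothesis that $C$ is itself open: since $C\subset C$ and $C$ is open, $C$ is an admissible choice of $E$. Substituting $E=C$ in the defining inclusion then forces $P\subset C$.

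Finally, combining $P\subset C$ with $C\subset P$ gives $P=C$, and since $P$ is closed by construction, $C$ is closed, which is the assertion. There is essentially no obstacle here; the only point requiring any care is the legitimacy of taking $E=C$ in the defining condition, and this is precisely where the openness of $C$ enters the proof.
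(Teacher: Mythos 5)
Your proof is correct and follows essentially the same route as the paper: both arguments fix the closed set $P\supset C$ guaranteed by Definition \ref{8}, use the openness of $C$ to substitute $E=C$ in the condition $P-E\in I=\{\emptyset\}$, and conclude $P=C$, so that $C$ is closed. Your explicit preliminary remark that $P-E\in\{\emptyset\}$ is the same as $P\subset E$ is just a spelled-out version of the step the paper performs implicitly.
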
 
\begin{proof}
Let $ C $ be $ Ig^*$-closed and open. Then by Definition \ref{8}, there is a closed set $ P $ containing $ C $ such that $ P-C\in I=\{\emptyset\} $ where $ C $ is open $ \Rightarrow P=C \Rightarrow C $ is closed. 
\end{proof}

\begin{theorem}\label{14}
A set $ C\subset (Y,\eta, I) $ is $ Ig^*$-closed  if and only if there is a closed set $ P $ containing $ C $ such that $ F\subset P - C $ for some closed set $ F $ implies $ F\in I $. \end{theorem}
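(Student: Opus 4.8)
The plan is to prove both implications using the \emph{same} closed set $P$ that witnesses one side, exploiting the complementation duality that interchanges ``open sets $E$ with $C\subset E$'' and ``closed sets $F$ with $F\subset P-C$''. The bridge between the two formulations is the identity $P-E=P\cap(Y-E)$, together with the fact that in a $\sigma$-space a finite (indeed countable) intersection of closed sets is again closed by Definition~\ref{1}, so that $P-E$ is closed whenever $E$ is open.

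For the forward direction, suppose $C$ is $Ig^*$-closed and let $P\supset C$ be a closed set with $P-E\in I$ for every open $E\supset C$, as furnished by Definition~\ref{8}. I would take an arbitrary closed set $F$ with $F\subset P-C$ and set $E=Y-F$. Since $F$ is closed, $E$ is open, and $F\cap C=\emptyset$ (because $F\subset P-C\subset Y-C$) yields $C\subset E$. The $Ig^*$-closedness then gives $P-E\in I$, and since $F\subset P$ one computes $P-E=P\cap F=F$. Hence $F\in I$, which is exactly the stated condition for this $P$.

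For the converse, suppose $P\supset C$ is closed and every closed $F\subset P-C$ lies in $I$. Given any open $E\supset C$, I would put $F=P-E=P\cap(Y-E)$. As the intersection of the closed sets $P$ and $Y-E$, this $F$ is closed, and from $C\subset E$ we get $F\subset Y-E\subset Y-C$, so that $F\subset P\cap(Y-C)=P-C$. The hypothesis therefore gives $F=P-E\in I$; since $E$ was an arbitrary open superset of $C$, the closed set $P$ witnesses that $C$ is $Ig^*$-closed.

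The argument is essentially self-dual and presents no serious obstacle, and it uses no assumption on the ideal. The only point requiring the $\sigma$-space axioms is the closedness of $P-E$, which rests on the stability of closed sets under intersection in Definition~\ref{1}, together with the small verification that $P-E=F$ in the forward direction, for which the containment $F\subset P$ is used.
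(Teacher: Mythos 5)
Your proof is correct and follows essentially the same route as the paper's: in the forward direction both arguments take a closed $F\subset P-C$, pass to the open complement $E=Y-F\supset C$, and compute $P-E=P\cap F=F\in I$; in the converse both set $F=P-E=P\cap(Y-E)$, observe it is closed and contained in $P-C$, and conclude $P-E\in I$. The only difference is cosmetic: you make explicit the small verifications (that $F\subset P$ is needed for $P-E=F$, and that closedness of $P-E$ comes from intersection stability of closed sets in a $\sigma$-space) which the paper leaves implicit.
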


\begin{proof}
Let $ C $ be an $ Ig^*$-closed set of $ (Y,\eta, I) $. Then there is a closed set $ P $ containing $ C $ such that $ P-U\in I $ whenever $ U $ is open and $ U\supset C $. Assume $ F\subset P -C $ and $ F $ is closed. So $ Y-F $ is open and $ C\subset Y-F $. Hence by definition, $ P-(Y-F)\in I\Rightarrow F=P\cap F=P-(Y-F)\in I $.

Conversely, suppose the conditions hold and $ U $ is an open set containing $ C $, then $ P-U\subset P-C $. Let $ F=P-U=P\cap (Y-U) $. Then $ F $ is a closed set and $ F\subset P-C $ and so by assumption, $ F\in I $ and hence $ C $ is $ Ig^*$-closed.
\end{proof}

\begin{corollary}\label{15}
Suppose $ C $ is an $ Ig^*$-closed set of $ (Y,\eta,I) $,  $ \overline{C} $ and $ \overline{C}-C $ are both closed, then $ C $ is closed if $ I $ is a trivial ideal.
\end{corollary}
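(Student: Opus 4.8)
The plan is to reduce everything to Theorem \ref{14} together with the elementary observation that the closure of a set sits inside every closed set containing it. Since $C$ is $Ig^*$-closed, Theorem \ref{14} furnishes a closed set $P$ with $C\subset P$ such that every closed set $F\subset P-C$ satisfies $F\in I$. Because $I$ is assumed trivial, that is, $I=\{\emptyset\}$, this condition forces $F=\emptyset$ for every closed $F$ contained in $P-C$. Thus the entire problem collapses to producing a single closed subset of $P-C$ that captures the obstruction to $C$ being closed, and the natural candidate is $\overline{C}-C$.

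First I would establish the containment $\overline{C}\subset P$, and then $\overline{C}-C\subset P-C$ follows at once. To see $\overline{C}\subset P$, note that $P$ closed and $C\subset P$ make $Y-P$ an open set disjoint from $C$; hence no point of $Y-P$ lies in $\overline{C}$, and therefore $\overline{C}\subset P$. Now I invoke the hypothesis that $\overline{C}-C$ is closed: applying the consequence of the first paragraph with $F=\overline{C}-C$, which is a closed subset of $P-C$, gives $\overline{C}-C\in I=\{\emptyset\}$, so $\overline{C}-C=\emptyset$. Since $C\subset\overline{C}$ holds in any space, this yields $C=\overline{C}$, and as $\overline{C}$ is closed by hypothesis, $C$ is closed.

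The step I would treat most carefully is the inclusion $\overline{C}\subset P$. In a $\sigma$-space an arbitrary intersection of closed sets need not be closed, so one cannot justify it by the slogan ``$\overline{C}$ is the smallest closed superset of $C$.'' The open-set argument above sidesteps this entirely, using only the definition of closure and the openness of $Y-P$, and so remains valid in the $\sigma$-space setting without assuming $\overline{C}$ is closed. Note that the two standing hypotheses play distinct roles: closedness of $\overline{C}-C$ is exactly what lets me feed it into Theorem \ref{14}, while closedness of $\overline{C}$ is used only at the final identification $C=\overline{C}$ to declare $C$ closed.
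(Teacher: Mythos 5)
Your proposal is correct and follows essentially the same route as the paper's proof: invoke Theorem \ref{14} to get the closed set $P\supset C$, show $\overline{C}-C$ is a closed subset of $P-C$, conclude $\overline{C}-C\in I=\{\emptyset\}$, and hence $C=\overline{C}$ is closed. The only cosmetic difference is that you justify $\overline{C}\subset P$ by the open-set/adherent-point argument, whereas the paper uses monotonicity of closure together with $\overline{P}=P$; both are valid in a $\sigma$-space, so this is the same proof with one step spelled out more cautiously.
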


\begin{proof}
Suppose the conditions hold. Since $ C $ is $ Ig^*$-closed then by Theorem \ref{14}, there is a closed set $ P $ containing $ C $ such that $ F\subset P-C $ for some closed set $ F \Rightarrow F\in I $. Now $ C\subset P\Rightarrow  \overline{C}\subset \overline{P}=P\Rightarrow\overline{C}-C\subset P-C $, $ \overline{C}-C $ is closed by assumption, then $ \overline{C}-C\in I=\emptyset$. So $ \overline{C}=C $ and hence $ C $ is a closed set.
\end{proof}

\begin{theorem}\label{16}
:   For each $y \in  (Y,\eta,I) ,$  $\{y\}$ is either closed or $ Y - \{y\}$ is $ Ig^*$-closed.
\end{theorem}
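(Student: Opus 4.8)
The plan is to argue by a simple dichotomy on the point $y$, following exactly the pattern used for $T_{1/2}$-type characterizations. First I would fix $y \in Y$ and split into two cases according to whether the singleton $\{y\}$ is closed. If $\{y\}$ is closed, then the first alternative of the conclusion already holds and there is nothing further to prove.

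The substantive case is when $\{y\}$ is \emph{not} closed; here I would show that $C := Y - \{y\}$ is $Ig^*$-closed directly from Definition \ref{8}. The key observation is to identify all open sets $E$ with $C \subset E$: since $C = Y - \{y\}$, any set containing $C$ is either $C$ itself or all of $Y$. Because $\{y\}$ is not closed, its complement $C = Y - \{y\}$ is not open, so the possibility $E = C$ is excluded. Hence $E = Y$ is the only open set satisfying $C \subset E$.

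With this in hand, I would verify the defining condition of Definition \ref{8} by taking the closed set $P = Y$, which certainly contains $C$. For the unique relevant open set $E = Y$ we then have $P - E = Y - Y = \emptyset \in I$, since $\emptyset$ belongs to every ideal by the defining axioms. Thus the requirement ``$P - E \in I$ whenever $C \subset E$ and $E$ is open'' is satisfied, and $C = Y - \{y\}$ is $Ig^*$-closed, establishing the second alternative.

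I do not expect any serious obstacle: the only point requiring care is the identification of open supersets of $Y - \{y\}$, where one must invoke the failure of $\{y\}$ to be closed in order to rule out $Y - \{y\}$ itself being open. Once that is noted, the $Ig^*$-closedness follows immediately with the witness $P = Y$, and no property of the ideal beyond $\emptyset \in I$ is needed.
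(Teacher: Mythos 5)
Your proof is correct and follows essentially the same route as the paper: when $\{y\}$ is not closed, you note $Y-\{y\}$ is not open, so $Y$ is the only open superset of $Y-\{y\}$, and then the witness $P=Y$ gives $P-Y=\emptyset\in I$. The only difference is that you spell out the dichotomy and the identification of open supersets more explicitly than the paper does, which is fine.
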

\begin{proof}
Suppose $ \{y\} $ is not closed. Then $ Y-\{y\} $ is not open. So $ Y $ is the only open set containing $ Y-\{y\}$. Thus we see that there is a closed set $ Y $ containing $ Y-\{y\}$ such that  $ Y-Y=\emptyset\in I $ whenever $ Y $ is an open set. Hence, $ Y-\{y\} $ is $ Ig^*$-closed.
\end{proof}

\begin{theorem}\label{17}
Let $ C_1 $ and $ C_2 $ be two $ Ig^* $-closed sets in $ (Y,\eta, I) $ then $ C_1\cup  C_2 $ is $ Ig^* $-closed.
\end{theorem}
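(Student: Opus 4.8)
The plan is to build the required closed set for $C_1 \cup C_2$ directly out of the closed sets supplied by the $Ig^*$-closedness of $C_1$ and $C_2$. By Definition \ref{8}, there is a closed set $P_1 \supset C_1$ with $P_1 - E \in I$ whenever $C_1 \subset E$ and $E$ is open, and likewise a closed set $P_2 \supset C_2$ with $P_2 - F \in I$ whenever $C_2 \subset F$ and $F$ is open. The natural candidate witnessing that $C_1 \cup C_2$ is $Ig^*$-closed is $P = P_1 \cup P_2$. First I would check the two structural properties this candidate must have: it must be closed and it must contain $C_1 \cup C_2$. Containment is immediate since $P_1 \supset C_1$ and $P_2 \supset C_2$. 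Closedness follows from axiom (2) of Definition \ref{1}, as the union of a finite number of closed sets in a $\sigma$-space is again closed.

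Next I would verify the membership-in-$I$ condition. Take any open set $E$ with $C_1 \cup C_2 \subset E$. Since $C_1 \subset C_1 \cup C_2 \subset E$ and $C_2 \subset C_1 \cup C_2 \subset E$, the defining properties of $P_1$ and $P_2$ give $P_1 - E \in I$ and $P_2 - E \in I$ (using the same open set $E$ in both instances). The elementary set identity $P - E = (P_1 \cup P_2) - E = (P_1 - E) \cup (P_2 - E)$ then expresses $P - E$ as a union of two members of $I$.

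Finally I would invoke property (ii) of the ideal in the Definition of an ideal: since $P_1 - E \in I$ and $P_2 - E \in I$, their union $(P_1 - E) \cup (P_2 - E) = P - E$ lies in $I$. Thus $P$ is a closed set containing $C_1 \cup C_2$ with $P - E \in I$ for every open $E \supset C_1 \cup C_2$, which is precisely the requirement of Definition \ref{8}, so $C_1 \cup C_2$ is $Ig^*$-closed.

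There is no serious obstacle here; the argument is a short composition of two closure properties. The only point demanding a little care is making sure a \emph{single} open set $E$ simultaneously triggers both defining conditions, so that the two resulting sets $P_1 - E$ and $P_2 - E$ are taken over the same $E$ and can be combined by the ideal's finite-union axiom. The essential ingredients are the finite-union closedness of closed sets in a $\sigma$-space and the finite-union stability of $I$; both are available, so the proof goes through cleanly.
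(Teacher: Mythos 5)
Your proposal is correct and follows essentially the same route as the paper: take the witnessing closed sets $P_1 \supset C_1$ and $P_2 \supset C_2$, use $P_1 \cup P_2$ (closed by finite-union axiom of a $\sigma$-space) as the witness for $C_1 \cup C_2$, and combine $P_1 - E \in I$ and $P_2 - E \in I$ via the ideal's finite-union property. If anything, your version is slightly more careful than the paper's about the quantifier order in Definition \ref{8} (fixing $P_1, P_2$ before the arbitrary open set $E$, rather than after), which is the logically correct reading.
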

\begin{proof}
 Suppose $ C_1 $ and $ C_2 $ are two $ Ig^* $-closed sets in an ideal $ \sigma $-space $ (Y,\eta, I) $ and $ U $ is an open set such that $ C_1\cup C_2\subset U $. Then $ C_1\subset U $ and $ C_2\subset U $. Therefore, there exist closed sets $ P_1 $ and $ P_2 $ containing  $ C_1 $ and $ C_2 $ respectively such that $ P_1-U\in 
 I $ and $ P_2-U\in 
 I $. Then $ (P_1\cup P_2)\supset  (C_1\cup C_2) $ and $ (P_1\cup P_2) $ is a closed set such that $ (P_1\cup P_2)-U\in I $. Hence the result follows.
 \end{proof}

As in the case of a topological space, intersection of two  $ Ig^* $-closed sets may not be $ Ig^* $-closed as seen from the Example \ref{18}

\begin{example}\label{18} Example of intersection of two  $ Ig^* $-closed sets may not be $ Ig^* $-closed.

Let $ Y=R-Q, \eta=\{\emptyset,Y,\{\sqrt{11}\}\cup V_i\} $ where $ V_i $ are all countable subsets of $ Y $ and $ I=\{\emptyset, \{\sqrt{17}\}\} $. Then $ (Y,\eta,I) $ is an ideal $ \sigma $-space but not an ideal topological space. Let $ C $  and $ D $ be the sets of all irrational numbers in $ (1,2) $ and $ (2,3) $ together with $ \sqrt{11} $ each respectively. So $ Y $ is the only open and closed sets containing $ C $ or $ D $; hence $ C $ and $ D $ are $ Ig^* $-closed sets. Now $ C\cap D=\{\sqrt{11}\} $ and $ Y $ is the only closed set containing $ C\cap D $, so $ C\cap D $ is not an $ Ig^* $-closed set since $ Y- $ open set containing $ \{\sqrt{11}\}   \not \in I $.  
\end{example}

\begin{theorem}\label{19}
If $ D\subset (Y,\eta,I) $ is $ Ig^*$-closed and $ D\subset C\subset \overline{D} $ then $ C $ is $ Ig^*$-closed.
\end{theorem}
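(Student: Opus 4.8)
The plan is to reuse the very closed set $P$ that witnesses the $Ig^*$-closedness of $D$ and show that it equally witnesses the $Ig^*$-closedness of $C$. First I would invoke Definition \ref{8} to extract a closed set $P \supset D$ with the property that $P - E \in I$ for every open $E$ satisfying $D \subset E$. The whole argument then amounts to a \emph{sandwich}: the hypothesis $D \subset C \subset \overline{D}$ will let me transport this property from $D$ to $C$ without changing the witness.

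The crucial observation is that the same $P$ already contains $C$. Indeed, since $P$ is closed and $D \subset P$, monotonicity of the closure operator together with $\overline{P} = P$ gives $\overline{D} \subset \overline{P} = P$; combined with the hypothesis $C \subset \overline{D}$ this yields $C \subset P$. This is the only place where the assumption $C \subset \overline{D}$ is used. It is worth noting that although closures need not be closed in a $\sigma$-space, here I only use that the closure of the genuinely closed set $P$ equals $P$, so no difficulty arises.

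Next I would verify the defining inequality for $C$ with witness $P$. Let $E$ be any open set with $C \subset E$. Because $D \subset C$, we also have $D \subset E$, and therefore the property of $P$ inherited from the $Ig^*$-closedness of $D$ forces $P - E \in I$. Thus $P$ is a closed set containing $C$ for which $P - E \in I$ holds for every open $E \supset C$, which is exactly the requirement of Definition \ref{8}, so $C$ is $Ig^*$-closed.

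I do not expect any genuine obstacle: the argument is a direct transport of the single witness $P$ from $D$ to $C$. The only point demanding a moment's care is confirming $C \subset P$ in a setting where closures may fail to be closed, but this reduces to the monotone behaviour of closure applied to the already-closed set $P$. Alternatively one could route everything through Theorem \ref{14}, observing that $P - C \subset P - D$ so that any closed $F \subset P - C$ is also contained in $P - D$ and hence lies in $I$; I would nonetheless keep the direct verification above, as it is shorter.
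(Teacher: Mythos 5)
Your proof is correct and takes essentially the same route as the paper's: both reuse the closed witness $P$ for $D$, observe $C \subset \overline{D} \subset \overline{P} = P$ by monotonicity of closure, and note that any open $E \supset C$ also contains $D$, so $P - E \in I$. Your version is, if anything, slightly cleaner in its quantifier order (fixing the single witness $P$ before quantifying over open sets, exactly as Definition \ref{8} demands), but the argument is the same.
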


\begin{proof}
Suppose $ D $ is an $ Ig^*$-closed set in $ (Y,\eta, I),   D\subset C\subset \overline{D} $ and $ C\subset U, U $ is open. Then $ D\subset U $ and hence  there is a closed set $ P, P\supset D $ such that $ P-U\in I $. Now $ D\subset P\Rightarrow \overline{D}\subset P \Rightarrow P\supset \overline{D}\supset C $ and $ P-U\in I $. Hence, $ C $ is  $ Ig^*$-closed.
\end{proof}

\begin{theorem}\label{20}
Suppose $ D\subset C\subset (Y,\eta,I) $ when $ C $ is open and $ g^* $-closed, then $ D $ is $ Ig^* $-closed relative to $ C $ with respect to the ideal $ I_C=\{O\subset C: O\in I\} $ if and only if $ D $ is $ Ig^* $-closed.
\end{theorem}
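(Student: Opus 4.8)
The plan is to first extract the one structural fact that makes everything work, and then verify the two implications by routine set manipulation using the ideal axioms. First I would observe that the hypotheses on $ C $ force $ C $ to be clopen. Indeed, since $ C $ is $ g^* $-closed, Definition~\ref{5} gives a closed set $ P\supset C $ with $ P\subset E $ whenever $ C\subset E $ and $ E $ is open; taking $ E=C $, which is legitimate because $ C $ is open, yields $ P\subset C $, and together with $ C\subset P $ this gives $ P=C $. Hence $ C $ is closed as well as open. This is the step on which the whole argument hinges.

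Once $ C $ is clopen, the subspace structure becomes transparent, and I would record it explicitly. Because a $ \sigma $-space is closed under finite intersections of open sets (dually, finite intersections of closed sets), a set $ V $ is open in $ C $ if and only if $ V $ is open in $ Y $ and $ V\subset C $, and a set $ F' $ is closed in $ C $ if and only if $ F' $ is closed in $ Y $ and $ F'\subset C $. I would also note in passing that $ I_C=\{O\subset C:O\in I\} $ is genuinely an ideal on $ C $, which follows immediately from the ideal axioms for $ I $.

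For the forward implication, assume $ D $ is $ Ig^* $-closed relative to $ C $, witnessed by a set $ P' $ closed in $ C $ (hence closed in $ Y $ by the remark above) with $ D\subset P' $ and $ P'-V\in I_C $ for every $ C $-open $ V\supset D $. Given an arbitrary open $ U\subset Y $ with $ D\subset U $, I would put $ V=U\cap C $, which is $ C $-open and contains $ D $ since $ D\subset U $ and $ D\subset C $. The key computation is that $ P'\subset C $ forces $ P'-U=P'-(U\cap C)=P'-V $, so $ P'-U=P'-V\in I_C\subset I $, exhibiting $ D $ as $ Ig^* $-closed in $ Y $ via the witness $ P' $. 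For the converse, assume $ D $ is $ Ig^* $-closed in $ Y $ via a closed set $ P\supset D $ with $ P-U\in I $ for all open $ U\supset D $. I would take $ P'=P\cap C $, which is closed in $ C $ and contains $ D $; then for any $ C $-open $ V\supset D $ (also $ Y $-open) I verify $ P'-V=(P-V)\cap C\subset P-V\in I $, so $ P'-V\in I $ by downward closure of $ I $, and $ P'-V\subset C $, whence $ P'-V\in I_C $.

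The only genuine obstacle is the clopen observation at the start; after that the two directions are symmetric bookkeeping, the recurring device being the identity $ P'-U=P'-(U\cap C) $ valid whenever $ P'\subset C $, which is precisely what reconciles the ambient and the relative notions of $ Ig^* $-closedness.
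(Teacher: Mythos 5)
Your proposal is correct and follows essentially the same route as the paper's proof: you first deduce that $C$ is closed (hence clopen) from the open and $g^*$-closed hypotheses, then handle the forward direction by intersecting an ambient open set $U\supset D$ with $C$, and the converse by passing to the witness $P\cap C$ and using downward closure of the ideal. Your explicit identification of $C$-open (resp.\ $C$-closed) sets with $Y$-open (resp.\ $Y$-closed) subsets of $C$, and the identity $P'-U=P'-(U\cap C)$ for $P'\subset C$, just make cleaner and more explicit the same bookkeeping the paper performs.
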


\begin{proof}
Let $ (Y, \eta,I) $ be an ideal $ \sigma $-space and $ D\subset C $ where $ C $ is open and $ g^* $-closed. Then by Definition \ref{5}, there exists a closed set $ M $ containing $ C $ such that $ M\subset C $ (since $ C\subset C, C $ is open) and hence $ M=C\Rightarrow C $ is closed.

Suppose $ D $ is $ Ig^* $-closed relative to $ C $ with respect to the ideal $ I_C $. Then there exists a closed set $ P $ in $ C, P\supset D $,  such that $ P-U\in I_C $ where $ U $ is open in $ C $. Now $ P=C\cap K $ where $ K $ is closed in $ Y $ and so $ P $ is closed in $ Y $. Let $ D\subset U_1 $ be an open set in $ Y $, then $ D\subset U_1\cap C $, an open set in $ C $. Since $ D $ is $ Ig^* $-closed relative to $ C $ with respect to the ideal $ I_C $, $ P-(U_1\cap C)=V $ (say), for some $ V\in I_C $ and since $ U_1\cap C\subset U_1 $ so $ P-U_1\subset V \Rightarrow P-U_1\in I $ and  hence $ D $ is $ Ig^* $-closed in $ Y $.

Let $ D $ be $ Ig^*$-closed and $ D\subset C $. If $ D\subset U, U $ is open then there exists a closed set $ P $ containing $ D $ such that $ P-U\in I $. Since $ P $ is closed in $ Y $, $ P\cap C $ is closed in $ C $ and $ P\cap C\supset D $. Again $ U\cap C $ is open in $ C $ and $ U\cap C\supset D $. As $ (P-U)\in I  $ and $ (P-U)\cap C\subset P-U\in I, (P\cap C)- (U\cap C)=(P-U)\cap C\in I \Rightarrow D $ is $ Ig^*$-closed relative to $ C $ with respect to the ideal $ I_C $.
\end{proof}

\begin{theorem}\label{21}
Assume $ C $ is open and $ g^* $-closed, then $ C\cap D $ is $ Ig^* $-closed  if $ D $ is $ Ig^* $-closed.
\end{theorem}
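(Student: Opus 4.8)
The plan is to produce, directly from Definition \ref{8}, a closed set that witnesses $C\cap D$ being $Ig^*$-closed. First I would record the structural fact already used inside the proof of Theorem \ref{20}: since $C$ is open and $g^*$-closed, applying Definition \ref{5} with the open set $C$ itself gives a closed set $M$ with $C\subseteq M\subseteq C$, whence $M=C$ and $C$ is closed. Thus $C$ is clopen, so $Y-C$ is open. This is the lever the whole argument turns on.

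Next, since $D$ is $Ig^*$-closed, I would fix a closed set $P\supseteq D$ such that $P-E\in I$ for every open $E\supseteq D$. The candidate witness for $C\cap D$ is $C\cap P$: it is closed (the intersection of two closed sets is closed by axiom (1) of Definition \ref{1}) and it contains $C\cap D$ because $P\supseteq D$.

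It then remains to verify the defining condition. Take any open $U$ with $C\cap D\subseteq U$. The key move is to enlarge $U$ to $E:=U\cup(Y-C)$, which is open since $Y-C$ is open and $\eta$ is closed under finite unions. One checks $D\subseteq E$ directly: a point of $D$ that lies in $C$ belongs to $C\cap D\subseteq U$, while a point of $D$ outside $C$ lies in $Y-C$. Hence $P-E\in I$. The heart of the computation is the set identity
\[
(C\cap P)-U \;=\; C\cap P\cap(Y-U) \;=\; P\cap\bigl[(Y-U)\cap C\bigr] \;=\; P-E,
\]
which uses $Y-E=(Y-U)\cap C$. Therefore $(C\cap P)-U=P-E\in I$, and by Definition \ref{8} the set $C\cap D$ is $Ig^*$-closed.

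I expect the only genuine obstacle to be spotting the enlargement $E=U\cup(Y-C)$ and confirming $Y-E=(Y-U)\cap C$; once the clopenness of $C$ is in hand, the rest is bookkeeping. An alternative route would combine Theorem \ref{20} with a relative-subspace argument on the clopen set $C$, but the direct construction above sidesteps the extra subspace formalism.
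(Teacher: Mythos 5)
Your proof is correct, but it takes a genuinely different route from the paper's. The paper argues by relativization: after noting (as you do) that an open $g^*$-closed set $C$ must be closed, it observes that $C\cap D$ is closed in the subspace $D$, hence $Ig^*$-closed relative to $D$ with respect to the trace ideal $I_D$, and then invokes Theorem \ref{20} to pass from relative to absolute $Ig^*$-closedness. You instead construct the witness directly: $C\cap P$ is closed, contains $C\cap D$, and the enlargement $E=U\cup(Y-C)$ together with the identity $(C\cap P)-U=P\cap(Y-U)\cap C=P-E$ verifies the ideal condition of Definition \ref{8}. Besides being self-contained, your route has a concrete advantage: the paper's appeal to Theorem \ref{20} uses $D$ as the ambient set of that theorem, yet Theorem \ref{20} requires its ambient set to be open and $g^*$-closed, a hypothesis that $D$ (assumed only $Ig^*$-closed) need not satisfy -- so the paper's proof as written has a gap that your direct computation avoids entirely. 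The trade-off is only one of economy: the paper's argument is two lines and leans on existing machinery, while yours carries out the verification by hand; but here the hand verification is exactly what makes the argument airtight.
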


\begin{proof}
Let $ C $ be open and $ g^* $-closed in $ (Y,\eta,I) $, then $ C $ is closed as proved in the first part of Theorem \ref{20}. So $ C\cap D $ is closed in $ D $, hence $ C\cap D $ is $ Ig^* $-closed relative to $ D $ with respect to the ideal $ I_D=\{O\subset D: O\in I\} $. So, by Theorem \ref{20}, $ C\cap D $ is $ Ig^* $-closed.  
\end{proof}

\section{\bf    $\textit Ig^
*$-open sets in a $ \sigma $-space}

We discuss in this section about $ Ig^
*$-open sets exclusively in ideal $ \sigma $-space.

\begin{theorem}\label{22}
A set $ C $ of $ (Y,\eta,I) $ is $ Ig^*$-open if and only if there is an open set $ V $ contained in $ C $ such that $ P-U \subset V $ for some $ U\in I $ whenever $  P\subset C $ and $ P $ is closed.
\end{theorem}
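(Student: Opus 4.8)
The plan is to reduce everything to the defining equivalence ``$C$ is $Ig^*$-open iff $Y-C$ is $Ig^*$-closed'' from Definition \ref{8}, and then translate the closed-set condition for $Y-C$ into an open-set condition for $C$ purely by complementation. First I would unfold the definition: $C$ is $Ig^*$-open means $Y-C$ is $Ig^*$-closed, i.e.\ there is a closed set $P'\supset (Y-C)$ with $P'-E\in I$ whenever $(Y-C)\subset E$ and $E$ is open. Setting $V=Y-P'$ makes $V$ an open set, and the requirement $P'\supset(Y-C)$ is equivalent to $V\subset C$; moreover every open $V\subset C$ arises this way from the closed set $Y-V\supset(Y-C)$, so the choice of the closed set $P'$ and the choice of the open set $V$ are in bijective correspondence.

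Next I would translate the condition over this correspondence. For an open $E$ with $(Y-C)\subset E$, put $P=Y-E$; then $P$ is closed, the relation $(Y-C)\subset E$ is exactly $P\subset C$, and the quantifier ``for all open $E\supset(Y-C)$'' becomes ``for all closed $P\subset C$''. The key computation is $P'-E=P'\cap(Y-E)=(Y-V)\cap P=P-V$, so the membership $P'-E\in I$ becomes $P-V\in I$. Hence $C$ is $Ig^*$-open if and only if there is an open set $V\subset C$ such that $P-V\in I$ for every closed set $P\subset C$.

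Finally I would match this to the stated form by verifying, for fixed $V$ and $P$, the equivalence $P-V\in I \iff \exists\,U\in I$ with $P-U\subset V$. For the forward direction take $U=P-V$, so that $U\in I$ and $P-U=P\cap V\subset V$; for the reverse direction, $P-U\subset V$ forces $P-V\subset U$, whence $P-V\in I$ because $I$ is hereditary. I expect this last equivalence to be the only genuinely content-bearing step, and it rests squarely on the downward-closure axiom (iii) of an ideal, while the rest is bookkeeping via de Morgan's laws. Substituting this equivalence into the characterization of the previous paragraph yields precisely the theorem.
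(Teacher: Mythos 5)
Your proposal is correct and follows essentially the same route as the paper's proof: both unfold the definition of $Ig^*$-open via the complement $Y-C$, use the order-reversing correspondences $P'\leftrightarrow V=Y-P'$ and $E\leftrightarrow P=Y-E$ together with the computation $P'-E=P-V$, and invoke the heredity axiom of the ideal in the converse direction. The only difference is organizational: you factor the argument into a clean chain of equivalences and isolate the step $P-V\in I \iff \exists\,U\in I \text{ with } P-U\subset V$, whereas the paper inlines the same set manipulations in two separately written directions.
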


\begin{proof}
Assume $ C $ is an $ Ig^*$-open set of $ (Y,\eta,I) $  and $ P\subset C, P $ is closed. So $ Y-C\subset Y-P $, an open set and $ Y-C $ is $ Ig^*$-closed. Therefore, there is a closed set $ F $ containing $ Y-C $ i.e. $ Y-F\subset C $ such that $ F-(Y-P)\in I $ i.e. $ F\subset (Y-P) \bigcup U $ for some $ U\in I $. This implies that $ Y-((Y-P)\bigcup U)\subset Y-F \Rightarrow P\bigcap (Y-U)\subset Y-F \Rightarrow P-U\subset Y-F=V $ (say), an open set.

Conversely, suppose the conditions hold. We will prove $ Y-C $ is $ Ig^*$-closed. Let $ G $ be an open set  such that $ Y-C\subset G $, then $ C\supset Y-G $, a closed set. Then by assumption, there is an open set $ V, V\subset C $, such that $ (Y-G) -U\subset V $ for some $ U\in I $. So $ (Y-G)\bigcap (Y-U)\subset V\Rightarrow Y-(G\bigcup U)\subset V\Rightarrow Y-V\subset G\bigcup U $. So $ (Y-V)-G\subset U, Y-V $ is a closed set containing $ Y-C $. Thus $ Y-C $ is $ Ig^*$-closed and hence $ C $ is $ Ig^*$-open.
\end{proof}

Union of two $ Ig^* $-open sets may not be always $ Ig^* $-open which is seen from the Example \ref{18}. However the next theorem shows that this will be true under certain additional condition.

\begin{theorem}\label{23}
Suppose $ C_1,C_2 $ are two weakly separated $ Ig^* $-open sets of $ (Y,\eta,I) $, then $ C_1\cup C_2 $ is $ Ig^* $-open. 
\end{theorem}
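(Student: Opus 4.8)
The plan is to verify the $Ig^*$-openness of $C_1\cup C_2$ through the open-set characterization established in Theorem \ref{22}, rather than working directly with the complement. A naive approach would note that $Y-(C_1\cup C_2)=(Y-C_1)\cap(Y-C_2)$ is an intersection of the two $Ig^*$-closed sets $Y-C_i$, but such intersections need not be $Ig^*$-closed in general (Example \ref{18}); this is precisely why the weak-separation hypothesis is imposed, and it is most naturally exploited through Theorem \ref{22}. Since $C_1$ and $C_2$ are $Ig^*$-open, that theorem furnishes open sets $V_1\subset C_1$ and $V_2\subset C_2$ such that every closed set lying inside $C_i$ is, after deletion of a member of $I$, contained in $V_i$. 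I would take $V=V_1\cup V_2$, which is open and satisfies $V\subset C_1\cup C_2$, as the candidate witnessing open set.

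Fix an arbitrary closed set $P\subset C_1\cup C_2$. The heart of the argument is to split $P$ into two closed pieces, one lying in $C_1$ and one in $C_2$, and here Definition \ref{3} is indispensable. Let $E,F$ be the open sets supplied by weak separation, so that $C_1\subset E$, $C_2\subset F$, $C_1\cap F=\emptyset$ and $C_2\cap E=\emptyset$. I would set $P_1=P\cap(Y-F)$ and $P_2=P\cap(Y-E)$; both are closed, being finite (hence countable) intersections of closed sets in the $\sigma$-space. From $C_1\cap F=\emptyset$ one gets $C_1\subset Y-F$ and $C_2\cap(Y-F)=\emptyset$, whence $P_1\subset(C_1\cup C_2)\cap(Y-F)=C_1$, and symmetrically $P_2\subset C_2$. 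To see that no part of $P$ is lost, observe that $C_1\cap F=\emptyset$ and $C_2\cap E=\emptyset$ together give $(C_1\cup C_2)\cap(E\cap F)=\emptyset$; since $P\subset C_1\cup C_2$ this forces $P\cap(E\cap F)=\emptyset$, i.e. $P\subset(Y-E)\cup(Y-F)$, and combined with $P_i\subset P$ this yields $P=P_1\cup P_2$.

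Now Theorem \ref{22}, applied to the closed sets $P_1\subset C_1$ and $P_2\subset C_2$, produces $U_1,U_2\in I$ with $P_1-U_1\subset V_1$ and $P_2-U_2\subset V_2$. Putting $U=U_1\cup U_2\in I$ (using that $I$ is closed under finite unions), monotonicity of set difference gives $P-U=(P_1-U)\cup(P_2-U)\subset V_1\cup V_2=V$, so $V$ meets the criterion of Theorem \ref{22} and $C_1\cup C_2$ is $Ig^*$-open. I expect the only genuine obstacle to be the decomposition step, namely establishing $P=P_1\cup P_2$ with $P_i\subset C_i$, since this is exactly where weak separation enters and where both relations $C_1\cap F=\emptyset$ and $C_2\cap E=\emptyset$ must be used; once the pieces are in hand, the passage through the ideal is entirely routine.
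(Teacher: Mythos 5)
Your proposal is correct and follows essentially the same route as the paper's proof: both apply the characterization of Theorem \ref{22} to extract witnesses $V_1\subset C_1$, $V_2\subset C_2$, take $V=V_1\cup V_2$, split an arbitrary closed $P\subset C_1\cup C_2$ into closed pieces lying in $C_1$ and in $C_2$ by intersecting $P$ with the complements of the weak-separation open sets, and then absorb the two ideal members into their union. The only cosmetic difference is the justification of the decomposition (the paper writes $P=(P\cap C_1)\cup(P\cap C_2)$ and uses $C_1\subset Y-F$, $C_2\subset Y-E$ directly, while you argue via $(C_1\cup C_2)\cap E\cap F=\emptyset$), but the resulting closed pieces are identical.
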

\begin{proof}
Suppose $ C_1,C_2 $ are two weakly separated $Ig^* $-open sets of $ (Y,\eta,I) $. So there are open sets $ U_1, U_2 $ such that  $ C_1\subset U_1,  C_2\subset U_2, U_1\cap C_2=U_2\cap C_1=\emptyset $. Let $ P_i=Y-U_i $ for $ i=1,2 $. Then $ P_i $ are closed sets and $ C_1\subset P_2, C_2\subset P_1 $. Again since $ C_1 $ and $ C_2 $ are $Ig^* $-open sets then there are open sets $ V_1, V_2 $ contained in $ C_1, C_2 $ respectively such that $ F_1-G_1\subset V_1 $ and $ F_2-G_2\subset V_2 $ for some $ G_1, G_2\in I $ whenever $ F_1, F_2 $ are closed sets and $ F_1\subset C_1 $ and $ F_2\subset C_2 $. Clearly $ V_1\cup V_2 $ is open and $ V_1\cup V_2\subset C_1\cup C_2 $. Let $ P\subset C_1\cup C_2 $ and $ P $ be closed. Now $ P=P\cap (C_1\cup C_2)=(P\cap C_1)\cup (P\cap C_2)\subset (P\cap P_2)\cup (P\cap P_1) $ where $ P\cap P_i $ is closed for each $ i=1,2 $. Further $ P\cap P_1\subset (C_1\cup C_2)\cap P_1\subset C_2 $. Similarly, $ P\cap P_2\subset C_1 $. Since $ C_1, C_2 $ are $ Ig^* $-open sets, $ P\cap P_1\subset V_2\bigcup U_2 $ and  $ P\cap P_2\subset V_1\bigcup U_1 $, for some $ U_1, U_2\in I $. Now $ P\subset (P\cap P_2)\cup (P\cap P_1)\subset (V_1\cup V_2)\bigcup(U_1\cup U_2)$ and so  $(C_1\cup C_2)$ is $ Ig^* $-open.  
\end{proof}

\begin{theorem}\label{24}
Let $ (Y,\eta,I) $ be an ideal $ \sigma $-space with $ I=\{\emptyset\} $. A set $ C\subset Y $ is $ Ig^* $-open if and only if there is an open set $ V\subset C $ such that $ V\cup (Y-C)\subset U, U $ is open implies  $ U=Y $.
\end{theorem}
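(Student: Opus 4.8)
The plan is to reduce everything to the definition of $Ig^*$-openness via complementation and to exploit that with $I=\{\emptyset\}$ the membership $P-E\in I$ simply means $P\subseteq E$. Writing $A=Y-C$, a set $C$ is $Ig^*$-open precisely when $A$ is $Ig^*$-closed, and by Definition \ref{8} with the trivial ideal this holds if and only if there is a closed set $P\supseteq A$ such that $P\subseteq E$ for every open set $E\supseteq A$ (equivalently $A\subseteq P\subseteq ker(A)$). I would show that the open set $V$ appearing in the statement and this closed set $P$ are complementary, namely $V=Y-P$.

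For the forward direction, I would start from an $Ig^*$-open $C$, obtain the closed set $P$ with $A\subseteq P$ and $P\subseteq E$ for all open $E\supseteq A$, and set $V=Y-P$. Then $V$ is open and, since $A\subseteq P$, we get $V\subseteq Y-A=C$, so $V$ is a legitimate candidate. To verify the stated property, take any open $U$ with $V\cup A\subseteq U$. From $A\subseteq U$ the defining property of $P$ forces $P\subseteq U$, while $V=Y-P\subseteq U$ gives the complementary inclusion; together $U\supseteq P\cup(Y-P)=Y$, hence $U=Y$.

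For the converse, I would take the open $V\subseteq C$ supplied by the hypothesis and set $P=Y-V$, which is closed and satisfies $P=Y-V\supseteq Y-C=A$. It remains to check $P\subseteq E$ for an arbitrary open $E\supseteq A$. The key step is to feed the right test set into the hypothesis: putting $U=E\cup V$, which is open as a finite union of open sets and contains $V\cup A$ (because $A\subseteq E$), the hypothesis yields $U=Y$. Thus $E\cup V=Y$, equivalently $(Y-E)\cap(Y-V)=\emptyset$, i.e.\ $P=Y-V\subseteq E$. This produces a closed $P$ with $A\subseteq P\subseteq ker(A)$, so $A$ is $Ig^*$-closed and therefore $C$ is $Ig^*$-open.

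I expect the only genuinely inventive point to be the construction $U=E\cup V$ in the converse: one has to guess this combination in order to translate the clause ``$P\subseteq E$ for all open $E\supseteq A$'' into the single condition ``$V\cup(Y-C)\subseteq U\Rightarrow U=Y$.'' Everything else is routine complementation, and the restriction $I=\{\emptyset\}$ is used precisely to replace $P-E\in I$ by the set equality $P\subseteq E$.
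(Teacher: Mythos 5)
Your proof is correct, and it is essentially the paper's argument seen through complementation: your witness $V=Y-P$ and the union trick $U=E\cup V$ in the converse are exactly the complements of the closed-set manipulations in the paper's proof (where one takes a closed $P\subset C$ and tests the open set $V\cup(Y-P)$). The only real difference is one of routing: the paper invokes its Theorem \ref{22} characterization of $Ig^*$-open sets as a black box, whereas you re-derive its trivial-ideal content directly from Definition \ref{8} applied to $Y-C$, making your argument self-contained but not substantively different.
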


\begin{proof}
Let $ C $ be $ Ig^* $-open in $ (Y,\eta,I) $ where $ I=\{\emptyset\} $. Then there is an open set $ V, V\subset C $, satisfying the condition of Theorem \ref{22}. Assume $ U $ is an open set such that $ V\cup (Y-C)\subset U $. Taking complement, $  C\cap (Y-V)\supset (Y-U) $. So $ (Y-V)\supset Y-U $ and $ C\supset (Y-U) $, a closed set. Since $ C $ is $ Ig^* $-open, $ (Y-U)-\{\emptyset\}\subset V $. Hence $ Y-U\subset V\cap (Y-V)=\emptyset \Rightarrow U=Y $.

Conversely, let there be an open set $ V\subset C $ such that $ V\cup (Y-C) \subset U, U $ is an open set implies $ U=Y $. Let $ P $ be a closed set contained in $ C $. Now $ V\cup (Y-C)\subset V\cup (Y-P), $ an open set. Then by the given conditions, $ V\cup (Y-P)=Y $ which implies $ P\subset V\Rightarrow P-V\subset \{\emptyset\} $ i.e. $ P-\{\emptyset\}\subset V $. Hence $ C $ is $ Ig^* $-open since $ I=\{\emptyset\} $.
\end{proof}

\begin{theorem}\label{25}
Let $ C $ be a subset in an ideal $ \sigma $-space $ (Y,\eta,I) $ and $ \overline{C} $ be closed, then $ C $ is $ Ig^* $-closed if and only if $ \overline{C}-C $ is $ Ig^* $-open. 
\end{theorem}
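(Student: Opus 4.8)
The plan is to reduce \emph{both} sides of the stated equivalence to a single hereditary condition on the ``boundary'' set $\overline{C}-C$, namely
\[ (\star)\qquad \text{every closed set } F\subset \overline{C}-C \text{ lies in } I. \]
The two reductions are powered by the earlier characterisations: Theorem~\ref{22} for $Ig^*$-openness and Theorem~\ref{14} for $Ig^*$-closedness. So the whole proof amounts to showing that $\overline{C}-C$ is $Ig^*$-open $\iff(\star)\iff C$ is $Ig^*$-closed.

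The geometric fact that carries the argument, and which I expect to be the main hinge, is that $\overline{C}-C$ contains \emph{no} nonempty open set. Indeed, if $V$ is open with $V\subset \overline{C}-C$ and $x\in V$, then $x\in\overline{C}$, yet $V$ is an open neighbourhood of $x$ with $V\cap C\subset(\overline{C}-C)\cap C=\emptyset$; this contradicts $x\in\overline{C}$, forcing $V=\emptyset$. With this in hand I apply Theorem~\ref{22} to $A=\overline{C}-C$: the theorem supplies an open $V\subset A$, but by the fact just proved the only admissible choice is $V=\emptyset$, so the condition of Theorem~\ref{22} collapses to the requirement that for every closed $P\subset \overline{C}-C$ there is $U\in I$ with $P-U\subset\emptyset$, i.e. $P\subset U$, i.e. $P\in I$ by the heredity of the ideal. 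Thus $\overline{C}-C$ is $Ig^*$-open precisely when $(\star)$ holds; conversely, given $(\star)$ one takes $V=\emptyset$ and $U=P$ for each closed $P\subset \overline{C}-C$, verifying the condition of Theorem~\ref{22}.

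For the $Ig^*$-closed side I invoke the hypothesis that $\overline{C}$ is closed. Since $\overline{C}$ is contained in every closed superset of $C$, being closed it is the \emph{smallest} closed set containing $C$. Hence in Theorem~\ref{14} the witnessing closed set $P\supset C$ may be taken to be $\overline{C}$ itself: any other closed witness $P$ satisfies $\overline{C}\subset P$, so $\overline{C}-C\subset P-C$ and the hereditary-in-$I$ condition on closed subsets passes from $P$ down to $\overline{C}$. Consequently $C$ is $Ig^*$-closed exactly when every closed $F\subset \overline{C}-C$ lies in $I$, i.e. again precisely when $(\star)$ holds. Chaining the two equivalences gives the theorem. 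The only delicate point is this last replacement of the existential witness of Theorem~\ref{14} by $\overline{C}$, and that is exactly the step where the assumption ``$\overline{C}$ closed'' is consumed; without it, $\overline{C}$ need not be an admissible closed witness at all.
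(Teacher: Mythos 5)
Your proof is correct and takes essentially the same route as the paper's: both directions run on Theorems~\ref{14} and \ref{22}, reducing each side to the condition that every closed subset of $\overline{C}-C$ belongs to $I$, with the empty set serving as the open set $V$ in Theorem~\ref{22} and $\overline{C}$ serving as the closed witness (the one place the hypothesis that $\overline{C}$ is closed is needed). If anything, your write-up is tighter than the paper's, which merely asserts $V=\emptyset$ without justification, whereas you prove the underlying fact that $\overline{C}-C$ can contain no nonempty open set.
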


\begin{proof}
Assume $ \overline{C} $ is closed, $ C $ is $ Ig^* $-closed and $ P\subset \overline{C}-C,  P $ is a closed set. Then by Theorem \ref{14}, $ P\in I\Rightarrow P-U=\emptyset $ for some $ U\in I $. Since $ \{\emptyset\} $ is an open set of $(\overline{C}-C) $, by Theorem \ref{22}, $ \overline{C}-C  $ is $ Ig^* $-open.

Conversely, let $ \overline{C}-C  $ be $ Ig^* $-open and $ \overline{C} $ be closed. We will prove that $ C $ is $ Ig^* $-closed. Let $ C\subset G, G $ is open. Now $ \overline{C}\cap (Y-G) \subset \overline{C}\cap (Y-C)=\overline{C}-C  $ and $ \overline{C}\cap (Y-G) $ is a closed set contained in  $ \overline{C}-C  $. Since $ \overline{C}-C  $ is $ Ig^* $-open, by Theorem \ref{22}, there is an open set $ V=\{\emptyset\} $ contained in $ \overline{C}-C  $ such that $ \overline{C}\cap (Y-G)-U\subset V =\{\emptyset\} $ for some $ U\in I \Rightarrow \overline{C}\cap (Y-G)\subset U\in I  $ and hence $ \overline{C}-G\in I $. Thus $ C $ is $ Ig^* $-closed.
\end{proof}

\begin{theorem}\label{26}
In $ (Y,\eta,I) $, if $ Int(C)\subset D\subset C $ and $ C $ is $ Ig^* $-open, then $ D $ is $ Ig^* $-open.
\end{theorem}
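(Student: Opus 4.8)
The plan is to recognize this statement as the complementary (dual) form of Theorem~\ref{19}, and to prove it either by passing to complements or, more transparently, by working directly with the openness criterion of Theorem~\ref{22}. First I would set up the duality route. Writing $C'=Y-C$ and $D'=Y-D$, the hypothesis $Int(C)\subset D\subset C$ becomes $C'\subset D'\subset Y-Int(C)$. Here I would invoke Lemma~\ref{7} in complemented form: since $Int(C)=Y-\overline{Y-C}$, we have $Y-Int(C)=\overline{Y-C}=\overline{C'}$. Thus the chain reads $C'\subset D'\subset\overline{C'}$, and $C'$ is $Ig^*$-closed precisely because $C$ is $Ig^*$-open.

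Next I would apply Theorem~\ref{19} directly to $C'$: since $C'$ is $Ig^*$-closed and $C'\subset D'\subset\overline{C'}$, the set $D'$ is $Ig^*$-closed, and hence $D=Y-D'$ is $Ig^*$-open, which is the desired conclusion.

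Alternatively, and perhaps more readably, I would argue straight from Theorem~\ref{22}. Since $C$ is $Ig^*$-open, there is an open set $V\subset C$ such that every closed $P\subset C$ satisfies $P-U\subset V$ for some $U\in I$. Because $V$ is open and $V\subset C$, in fact $V\subset Int(C)\subset D$, so $V$ is an open subset of $D$ as well. Now take any closed $Q\subset D$; since $D\subset C$ we have $Q\subset C$, and the property of $C$ gives $Q-U\subset V$ for some $U\in I$. This is exactly the criterion of Theorem~\ref{22} for $D$ with witness $V$, so $D$ is $Ig^*$-open.

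There is essentially no hard step here; the only point requiring care is the identification $Y-Int(C)=\overline{C'}$ in the duality route (which is just Lemma~\ref{7}), or equivalently the observation $V\subset Int(C)\subset D$ in the direct route, which guarantees that the open witness for $C$ already lies inside $D$. I would present the direct argument as the main proof, since it avoids re-deriving the complement of the interior and keeps the witness set explicit.
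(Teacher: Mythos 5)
Your proposal is correct, and in fact it contains two complete arguments. Your first (duality) route is exactly the paper's proof: the paper passes to complements, observes via Lemma~\ref{7} that $Int(C)\subset D\subset C$ is equivalent to $Y-C\subset Y-D\subset\overline{Y-C}$, and applies Theorem~\ref{19} to the $Ig^*$-closed set $Y-C$ to conclude $Y-D$ is $Ig^*$-closed. Your preferred direct route via Theorem~\ref{22} is genuinely different and also valid: the witness open set $V\subset C$ for $C$ satisfies $V\subset Int(C)\subset D$ (any open subset of $C$ lies in $Int(C)$, even though in a $\sigma$-space $Int(C)$ itself need not be open), and every closed $Q\subset D$ is also a closed subset of $C$, so the same $V$ witnesses the criterion for $D$. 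The comparison: the paper's argument is a two-line reduction that leans on previously built machinery (Lemma~\ref{7} and Theorem~\ref{19}) and mirrors the closed-set statement by pure duality, while your direct argument is self-contained modulo the characterization in Theorem~\ref{22}, keeps the witness set explicit, and avoids manipulating complements and closures altogether --- arguably a cleaner proof to read, at the cost of not exhibiting Theorem~\ref{26} as the formal dual of Theorem~\ref{19}.
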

\begin{proof}
Let $ Int(C)\subset D\subset C $ and  $ C $ be $ Ig^* $-open set,  then $ Y-C\subset Y-D\subset \overline{Y-C} $ and $ Y-C $ is $ Ig^* $-closed. By Theorem \ref{19}, $ Y-D $ is $ Ig^* $-closed and hence $ D $ is $ Ig^* $-open.
\end{proof}

\begin{theorem}\label{27}  Assume $ D\subset C\subset Y $ and $ C $ is closed and $ Ig^* $-open, then 
$ D $ is $ Ig^* $-open relative to $ C $ with respect to the ideal $ I_C=\{O\subset C: O\in I\} $ if and only if $ D $ is $ Ig^* $-open.
\end{theorem}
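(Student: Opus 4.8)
The plan is to treat this as the $Ig^*$-open counterpart of Theorem \ref{20}, with the open characterization of Theorem \ref{22} playing the role that the defining condition for $Ig^*$-closedness played there. Throughout I would work in the ideal subspace $(C,\eta_C,I_C)$, recalling that its closed sets are exactly the sets $K\cap C$ with $K$ closed in $Y$, and its open sets are exactly $W\cap C$ with $W$ open in $Y$. Since $C$ is closed in $Y$, any $P=K\cap C$ that is closed in $C$ is automatically closed in $Y$; this lets me move closed test sets freely between the two spaces, and it is the analogue here of the role ``$C$ is closed'' played in Theorem \ref{20}.

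For the easier implication, assume $D$ is $Ig^*$-open in $Y$. By Theorem \ref{22} there is an open set $V\subset D$ of $Y$ such that every closed $P\subset D$ satisfies $P-U\subset V$ for some $U\in I$. I would check that this same $V$ works as the witness in $C$: it is open in $C$ since $V$ is open in $Y$ and $V\subset C$, and for any $P$ closed in $C$ with $P\subset D$, the set $P$ is closed in $Y$ (because $C$ is closed), so $P-U\subset V$ for some $U\in I$; replacing $U$ by $U\cap C\in I_C$ changes nothing since $P\subset C$. By Theorem \ref{22} applied in $C$, $D$ is $Ig^*$-open relative to $C$. This direction uses only that $C$ is closed.

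The converse is harder, and here lies the main obstacle. Assume $D$ is $Ig^*$-open relative to $C$; Theorem \ref{22} applied in $C$ yields an open set $V=W\cap C$ of $C$, with $V\subset D$, that handles every relatively closed test set. The trouble is that $V$ need not be open in $Y$, so it cannot serve directly as a witness for $D$ in $Y$. This is exactly where the hypothesis that $C$ is $Ig^*$-open enters: Theorem \ref{22} provides a genuinely open set $V_C\subset C$ of $Y$ such that every closed $P\subset C$ of $Y$ satisfies $P-U_1\subset V_C$ for some $U_1\in I$. I would then propose $V':=W\cap V_C$, which is open in $Y$ and satisfies $V'=W\cap V_C\subset W\cap C=V\subset D$.

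To finish, I would take any closed $P'\subset D$ of $Y$. Then $P'\subset C$, so $P'-U_1\subset V_C$ for some $U_1\in I$; and $P'$ is also closed in $C$ with $P'\subset D$, so the relative property gives $P'-U_2\subset V\subset W$ for some $U_2\in I_C\subset I$. Setting $U'=U_1\cup U_2\in I$ and intersecting the two inclusions yields $P'-U'\subset V_C\cap W=V'$, whence $D$ is $Ig^*$-open in $Y$ by Theorem \ref{22}. The point requiring care is that the single open set $V'$ must work uniformly for all closed $P'$; this causes no problem, since $V_C$ and $W$ are fixed independently of $P'$.
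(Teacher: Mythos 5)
Your proof is correct and follows essentially the same route as the paper's: both directions rest on the characterization in Theorem \ref{22}, and your key step in the harder direction --- intersecting the open set $W$ underlying the relative witness $V=W\cap C$ with the uniform open witness $V_C$ supplied by the $Ig^*$-openness of $C$ --- is exactly the paper's construction $G_1\cap V_2$. Your other direction is in fact slightly cleaner than the paper's, which redundantly invokes the $Ig^*$-openness of $C$ there and overlooks that $U_1\cup U_2$ need not belong to $I_C$; your replacement of $U$ by $U\cap C$ handles that point correctly.
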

\begin{proof}
Suppose  $ (Y,\eta,I) $ is an ideal $ \sigma $-space and $ D $ is $ Ig^* $-open relative to $ C $ and $ P\subset D\subset C, P $ is closed in $ Y $. So $ P $ is closed in $ C $. Then there is an open set $ V_1 $ relative to $ C $ contained in $ D $ such that $ P-U_1\subset V_1 $ for some $ U_1\in I_C $. Now $ V_1=G_1\cap C $ for some open set $ G_1 $ in $ Y $, so $ P-U_1\subset G_1\cap C\subset D $. Since $ C  $ is $ Ig^* $-open and $ P\subset D\subset C $, there exists an open set $ V_2 $ in $ Y $ contained in $ C $ such that  $ P-U_2\subset V_2\subset C $ for some $ U_2\in I $ . Now $ P-(U_1\bigcup U_2)\subset (P-U_1)\bigcap (P-U_2)\subset (G_1\bigcap C)\bigcap V_2=(G_1\bigcap V_2)\bigcap C\subset (G_1\bigcap V_2)\subset G_1\bigcap C\subset D $. Therefore, $ G_1\bigcap V_2 $ is an open set of $ Y $ contained in $ D $ and $ U_1\bigcup U_2\in I $. So $ D $ is $ Ig^* $-open.

Conversely, suppose $ C $ and $ D $ are $ Ig^* $-open, $ C $ is closed, $ D\subset C $. Let  $ F $ be a closed  set relative to $ C, F\subset D $. So $ F $ is  closed in $ Y $. Since $ D $ is $ Ig^* $-open, there is an open set $ V_1 $ contained in $ D $ such that $ F-U_1\subset V_1 $ for some $ U_1\in I $. Again since $ C $ is $ Ig^* $-open and $ F\subset D\subset C $, there is an open set $ V_2 $ contained in $ C $ such that $ F-U_2\subset V_2= V_2\cap C $ for some $ U_2\in I $. Now $ F-(U_1\cup U_2)=(F-U_1)\cap (F-U_2)\subset V_1\cap V_2\cap C\subset V_1\cap C $, an open set in $ C $ and $ V_1\cap C\subset V_1\subset D $ and $ (U_1\cup U_2)\in I_C $. Hence the result follows.
\end{proof}

\section{\bf $ Ig^*$-$T_0, Ig^*$-$T_1$ and $ Ig^*$-$T_\omega $ axioms}

We introduce here some separation axioms viz. $ Ig^*$-$T_0, Ig^*$-$T_1$ and $ Ig^*$-$T_\omega $ axioms in an ideal $ \sigma $-space and investigate some of their related topological properties and try to establish a relation among them.

\begin{definition}\label{28}  An ideal $ \sigma $-space $ (Y,\eta,I) $ is said to be

(I):  $ Ig^*$-$T_0 $ if for any pair of distinct points $ p,q\in Y $, there exists an $ Ig^* $-open set $ U $ which contains one of them but not the other;

(II):  $ Ig^*$-$T_1 $ if for any pair of distinct points $ p,q\in Y $, there exist $ Ig^* $-open sets $ U $ and $ V $  such that $ p\in U, q\not\in U, q\in V, p\not\in V $.
\end{definition}

\begin{definition}\label{29}
A point $ p $ in an ideal $ \sigma $-space $ (Y,\eta,I) $  is said to be an $ Ig^* $-limit point of a set $ D\subset Y $ if every $ Ig^* $-open set $ U $ containing $ p $ such that $ U\cap (D-\{p\})\not=\emptyset $. The set of all $ Ig^* $-limit points of a set $ D\subset Y $ is denoted by $ D'_{Ig^*} $.
\end{definition}

\begin{definition}\label{30}
For any set $ C $ of an ideal $ \sigma $-space $ (Y,\eta,I) $ we define $ Ig^* $-closure of $ C $  denoted by $ \overline{C^{Ig^*}}=\bigcap \{P: C\subset P $ and $ P $ is $ Ig^* $-closed\}.  
\end{definition}

It reveals from the Example \ref{31} that $ Ig^* $-closure of a set may not be $ Ig^* $-closed.

\begin{example}\label{31}
Example of $ Ig^* $-closure of a set which is not $ Ig^* $-closed in an ideal $ \sigma $-space. 

We consider the Examole \ref{10} (i) where we can easily verify that the ideal $ \sigma $-space is $ Ig^*$-$T_0 $. Assume $ p,q\in Y, p\not=q $ and for $ Ig^*$-$T_0 $ ideal $ \sigma $-space there is an $ Ig^*$-open set $ V =\{p\} $ such that $ p\in V $ and $ q\not\in V $, so $ V\cap \{q\}=\emptyset $ and hence $ p $ is not an $ Ig^*$-limit point of $ \{q\} $. This is true for any point $ p, p\not=q $. Therefore $ \{q\}'_{Ig^*}=\emptyset\subset \{q\}\Rightarrow  \overline{\{q\}^{Ig^*}}=\{q\} $ but $ \{q\} $ is not $ Ig^*$-closed.
\end{example}

\begin{lemma}\label{32}
 Let   $ A $ be a subset of an ideal $ \sigma $-space $ (Y,\eta,I) $, then $ \overline{( \overline{A^{Ig^*}})^{I_g^*}}=\overline{A^{Ig^*}}  $.
 \end{lemma}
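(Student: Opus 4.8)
The plan is to treat $\overline{(\cdot)^{Ig^*}}$ purely as the intersection operator of Definition \ref{30} and to use only its two elementary formal properties, extensivity and monotonicity, so that the failure of $\overline{A^{Ig^*}}$ to be $Ig^*$-closed (the phenomenon exhibited in Example \ref{31}) never actually enters the argument. Write $B=\overline{A^{Ig^*}}$. First I would record the general fact that $C\subseteq\overline{C^{Ig^*}}$ for every subset $C$: each member $P$ of the defining family satisfies $C\subseteq P$, hence so does their intersection. Applying this with $C=B$ yields at once the inclusion $B\subseteq\overline{B^{Ig^*}}$.

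For the reverse inclusion I would compare the two defining families directly. Let $P$ be any $Ig^*$-closed set with $A\subseteq P$. Since $B$ is by definition the intersection of all such sets and $P$ is one of them, we get $B\subseteq P$. Thus every $Ig^*$-closed superset of $A$ is also an $Ig^*$-closed superset of $B$, i.e. the family $\{P:A\subseteq P,\ P\text{ is }Ig^*\text{-closed}\}$ is a subfamily of $\{Q:B\subseteq Q,\ Q\text{ is }Ig^*\text{-closed}\}$. Intersecting over a larger family produces a smaller set, so $\overline{B^{Ig^*}}=\bigcap\{Q\}\subseteq\bigcap\{P\}=B$. Combining the two inclusions gives $\overline{B^{Ig^*}}=B$, which, after unwinding $B=\overline{A^{Ig^*}}$, is exactly the stated identity $\overline{(\overline{A^{Ig^*}})^{Ig^*}}=\overline{A^{Ig^*}}$.

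The only points I would check explicitly are that both defining families are nonempty and that the inclusion reverses correctly under intersection. Nonemptiness is immediate: $Y$ is closed, hence $Ig^*$-closed by Remark \ref{9}, and contains both $A$ and $B$, so each intersection is well defined. I expect no genuine obstacle in this lemma; the subtle point is merely psychological, since Example \ref{31} warns that $\overline{A^{Ig^*}}$ need not be $Ig^*$-closed and so tempts one to search for the harder route of showing the closure is closed. The intersection formulation of Definition \ref{30} makes idempotency automatic without ever needing that, and the whole proof reduces to the extensivity and monotonicity observations above.
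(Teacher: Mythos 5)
Your proposal is correct and follows essentially the same route as the paper's own proof: extensivity of the closure operator gives $\overline{A^{Ig^*}}\subset \overline{(\overline{A^{Ig^*}})^{Ig^*}}$, and the reverse inclusion comes from observing that every $Ig^*$-closed superset of $A$ already contains $\overline{A^{Ig^*}}$, so the defining family for $A$ sits inside that for $\overline{A^{Ig^*}}$ and the intersections compare the right way. Your added care about nonemptiness of the families (via $Y$ being $Ig^*$-closed) is a small refinement the paper leaves implicit.
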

 \begin{proof}
 Since $ A \subset \overline{A^{Ig^*}} $ then $ \overline{A^{Ig^*}}\subset \overline{( \overline{A^{Ig^*}})^{I_g^*}} $. Again, by definition, $ \overline{( \overline{A^{Ig^*}})^{I_g^*}}= \bigcap_{i\in\Lambda}\{ V_i; V_i $'s are $ Ig^* $-closed  and $ V_i \supset \overline{A^{Ig^*}}\} $ and $ \overline{A^{Ig^*}} $ = $\bigcap _{j\in\Lambda'}\{ U_j; U_j $'s are $ Ig^* $-closed  and $ U_j \supset A \} $ where $ \Lambda $ and $ \Lambda' $ are indexing sets. The sets $ U_j, j \in \Lambda' $ also contain $ \overline{A^{Ig^*}} $ i.e. $ U_j\supset \overline{A^{Ig^*}} , j \in \Lambda'$. Therefore the collection $ \{U_j, j\in\Lambda'\} \subset \{V_i, i\in\Lambda\} $ and so $ \bigcap _{j\in\Lambda'} U_j \supset\bigcap_{i\in\Lambda} V_i $ implies $ \overline{A^{Ig^*}} \supset \overline{( \overline{A^{Ig^*}})^{I_g^*}} $. Hence  $ \overline{( \overline{A^{Ig^*}})^{I_g^*}}=\overline{A^{Ig^*}}  $.
 \end{proof}

\begin{theorem}\label{33}
An ideal $ \sigma $-space $ (Y,\eta,I) $ is $ Ig^*$-$T_0 $ if and only if $ Ig^* $-closures of any two disjoint singletons are disjoint. 
\end{theorem}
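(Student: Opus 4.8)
The plan is to convert the defining intersection in Definition~\ref{30} into a pointwise separation statement phrased in terms of $Ig^*$-open sets, and then run the two implications separately; the converse is routine while the direct implication carries all the difficulty. The basic reformulation I would record first is this: for points $x,y$, one has $x\in\overline{\{y\}^{Ig^*}}$ if and only if every $Ig^*$-closed set containing $y$ also contains $x$, equivalently (passing to complements, which are $Ig^*$-open) if and only if there is \emph{no} $Ig^*$-open set $U$ with $x\in U$ and $y\notin U$. This is immediate from Definition~\ref{30}, since $\overline{\{y\}^{Ig^*}}$ is exactly the intersection of all $Ig^*$-closed supersets of $\{y\}$, and it rewrites the theorem entirely in the separation language of Definition~\ref{28}.

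For the converse direction I would argue directly. Assume the $Ig^*$-closures of any two disjoint singletons are disjoint, and let $p,q$ be distinct. Since $p\in\overline{\{p\}^{Ig^*}}$ while $\overline{\{p\}^{Ig^*}}\cap\overline{\{q\}^{Ig^*}}=\emptyset$, we get $p\notin\overline{\{q\}^{Ig^*}}$. By the reformulation there is then an $Ig^*$-open set $U$ with $p\in U$ and $q\notin U$, which is precisely the $Ig^*$-$T_0$ condition of Definition~\ref{28}(I). This half is short and self-contained.

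For the direct implication I would assume $Ig^*$-$T_0$ and aim at $\overline{\{p\}^{Ig^*}}\cap\overline{\{q\}^{Ig^*}}=\emptyset$ for distinct $p,q$. The axiom only supplies a single $Ig^*$-open $U$ separating the pair, say $p\in U$, $q\notin U$; then $Y-U$ is $Ig^*$-closed, contains $q$, and misses $p$, so $\overline{\{q\}^{Ig^*}}\subset Y-U$ and hence $p\notin\overline{\{q\}^{Ig^*}}$. To promote this partial information to genuine disjointness I would bring in Theorem~\ref{16}: for each point $r$, either $\{r\}$ is closed, whence $\{r\}$ is $Ig^*$-closed by Remark~\ref{9} and $\overline{\{r\}^{Ig^*}}=\{r\}$, or $Y-\{r\}$ is $Ig^*$-closed, in which case $Y-\{r\}$ separates $r$ from every other point and so $r\notin\overline{\{s\}^{Ig^*}}$ for all $s\neq r$. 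Feeding this dichotomy through the reformulation lets me discard any candidate common point $r\in\overline{\{p\}^{Ig^*}}\cap\overline{\{q\}^{Ig^*}}$ whose singleton $\{r\}$ is non-closed, since for such $r$ the set $Y-\{r\}$ already excludes $r$ from both closures.

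The main obstacle is the residual case in which the candidate common point $r$ satisfies $\{r\}$ \emph{closed} and $r$ differs from the relevant centre. Here $Y-\{r\}$ need not be $Ig^*$-closed, and closedness of $\{r\}$ produces an $Ig^*$-open set missing $r$ rather than one containing $r$; meanwhile the $Ig^*$-$T_0$ axiom only guarantees \emph{some} separating $Ig^*$-open set for the pair, with no control over its direction. Closing the argument in this case amounts to producing an $Ig^*$-open set containing $r$ but excluding the centre, equivalently an $Ig^*$-closed set containing the centre but excluding $r$, and it is exactly the ``contains one but not the other'' asymmetry of Definition~\ref{28}(I) that obstructs this. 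I expect this closed-singleton case to absorb essentially all the work, and I would scrutinise whether it can be settled by a finer use of Theorem~\ref{16} and the available separating sets, or whether it forces an additional hypothesis.
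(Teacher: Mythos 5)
Your suspicion at the end of the proposal is correct, and it should be stated plainly: the forward implication as literally written cannot be closed, because with ``disjoint'' the statement is false. Your own reformulation shows why: disjointness of $\overline{\{p\}^{Ig^*}}$ and $\overline{\{q\}^{Ig^*}}$ for all pairs $p\neq q$ gives both $p\notin\overline{\{q\}^{Ig^*}}$ and $q\notin\overline{\{p\}^{Ig^*}}$, hence $Ig^*$-open sets separating the pair in \emph{both} directions; so ``$Ig^*$-closures of distinct singletons are disjoint'' implies $Ig^*$-$T_1$, while the paper's Example \ref{40} (iii) and Example \ref{45} exhibit $Ig^*$-$T_0$ spaces that are not $Ig^*$-$T_1$. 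Concretely, in Example \ref{45} ($Y=\{p,q\}$, $\eta=\{\emptyset,Y,\{q\}\}$, $I=\{\emptyset\}$) the space is $Ig^*$-$T_0$, yet $\overline{\{p\}^{Ig^*}}=\{p\}$ and $\overline{\{q\}^{Ig^*}}=Y$ meet in $p$. So your residual case --- a common point $r$ with $\{r\}$ closed --- is not a technical obstacle to be overcome by a finer use of Theorem \ref{16}; it is exactly where the claim breaks, and no argument can remove it.

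What the paper actually proves, and what the theorem is evidently intended to say (it is also the form in which Theorem \ref{43} later invokes it), is the version with ``distinct'' in place of ``disjoint'': $(Y,\eta,I)$ is $Ig^*$-$T_0$ if and only if $\overline{\{p\}^{Ig^*}}\neq\overline{\{q\}^{Ig^*}}$ whenever $p\neq q$. Under that reading, your partial forward argument is already the entire forward proof: a separating $Ig^*$-open $U$ with $p\in U$, $q\notin U$ yields the $Ig^*$-closed set $Y-U$, whence $p\notin\overline{\{q\}^{Ig^*}}$ while $q\in\overline{\{q\}^{Ig^*}}$, so the two closures differ. The direction that then requires work is the converse, and there your argument no longer applies (mere inequality of the closures does not directly give $p\notin\overline{\{q\}^{Ig^*}}$). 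The missing ingredient is Lemma \ref{32}, the idempotence $\overline{(\overline{A^{Ig^*}})^{I_g^*}}=\overline{A^{Ig^*}}$: if $p\in\overline{\{q\}^{Ig^*}}$, then monotonicity of the $Ig^*$-closure together with Lemma \ref{32} forces $\overline{\{p\}^{Ig^*}}\subset\overline{\{q\}^{Ig^*}}$, and symmetrically $q\in\overline{\{p\}^{Ig^*}}$ would force the reverse inclusion; since the closures are unequal, one of the two memberships fails, and either exclusion produces the separating $Ig^*$-open set exactly as in your reformulation. So the correct diagnosis is: your converse half proves a statement stronger than needed from a hypothesis stronger than available, your forward half is complete for the intended statement, and the genuinely new tool needed to finish the intended theorem is Lemma \ref{32}.
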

\begin{proof}
Let $ (Y,\eta,I) $ be an $ Ig^*$-$T_0 $ ideal $ \sigma $-space and $ p,q\in Y; p\not= q $. Then there is an $ Ig^* $-open set $ U $ such that $ p\in U $ but $ q\not\in U $. So $ q\in Y-U $ and $ p\not\in Y-U $, an $ Ig^* $-closed set. Hence $ p\not\in \overline{\{q\}^{Ig^*}} $ and $ q\in \overline{\{q\}^{Ig^*}} $. This implies that $ \overline{\{p\}^{Ig^*}} \not= \overline{\{q\}^{Ig^*}} $.

Conversely, let $ p,q\in Y; p\not= q $ and $ \overline{\{p\}^{Ig^*}} \not= \overline{\{q\}^{Ig^*}} $. Then there is a point $ y\in Y $ such that $ y\in \overline{\{p\}^{Ig^*}} $ but $ y\not\in \overline{\{q\}^{Ig^*}} $. Now $ p\in  \overline{\{p\}^{Ig^*}} $, we claim that $ p\not\in \overline{\{q\}^{Ig^*}} $. If possible, let $ p\in \overline{\{q\}^{Ig^*}} \Rightarrow \overline{\{p\}^{Ig^*}}\subset \overline{( \overline{\{q\}^{Ig^*}})^{I_g^*}}=\overline{\{q\}^{Ig^*}} $, a contradiction to the assumption. Hence $ p\not\in \overline{\{q\}^{Ig^*}} $. So there is an $ Ig^* $-closed set $ F $ containing $ \{q\} $ such that $ p\not\in F $, then $ p\in Y-F $, an $ Ig^* $-open set and $ q\not\in Y-F $. Hence $ (Y,\eta,I) $ is an $ Ig^*$-$T_0 $ ideal $ \sigma $-space. 
\end{proof}

\begin{theorem}\label{34}
Every $ Ig^* $-closed set $ P $ of $ (Y,\eta,I) $ contains all its $ Ig^* $-limit points.
\end{theorem}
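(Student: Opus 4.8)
The plan is to prove the inclusion $P'_{Ig^*}\subset P$ by contraposition, exploiting the fact that, by Definition \ref{8}, the complement of an $Ig^*$-closed set is precisely an $Ig^*$-open set. This complementary relationship is the whole engine of the argument: it lets us manufacture, around any point outside $P$, an $Ig^*$-open neighbourhood that is disjoint from $P$, which is exactly what Definition \ref{29} forbids for a limit point.

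Concretely, I would start by taking an arbitrary point $p$ that is \emph{not} in $P$ and aim to show that $p$ cannot be an $Ig^*$-limit point of $P$. Since $P$ is $Ig^*$-closed, the set $U=Y-P$ is $Ig^*$-open, and because $p\notin P$ we have $p\in U$. Thus $U$ is an $Ig^*$-open set containing $p$. The next step is to compute the intersection governing the limit-point condition: because $U=Y-P$ we have $U\cap P=\emptyset$, and therefore $U\cap(P-\{p\})\subset U\cap P=\emptyset$. Hence there exists an $Ig^*$-open set $U$ containing $p$ with $U\cap(P-\{p\})=\emptyset$, so $p$ fails the defining condition of an $Ig^*$-limit point of $P$; that is, $p\notin P'_{Ig^*}$.

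Reading this contrapositively gives exactly the claim: if $p\in P'_{Ig^*}$ then $p\in P$, i.e. $P'_{Ig^*}\subset P$, so every $Ig^*$-closed set contains all of its $Ig^*$-limit points. There is really no serious obstacle here; the only point to handle with care is to be sure that the neighbourhood $Y-P$ qualifies as $Ig^*$-\emph{open} (which is immediate from the definition of $Ig^*$-open as the complement of an $Ig^*$-closed set, noting $Y-(Y-P)=P$ is $Ig^*$-closed). Everything else is a one-line set computation, so I would keep the write-up short and emphasize the complementation step as the key idea.
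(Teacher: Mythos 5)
Your proposal is correct and follows essentially the same argument as the paper's proof: take $p\notin P$, observe that $Y-P$ is $Ig^*$-open (since its complement $P$ is $Ig^*$-closed) and disjoint from $P$, so $p$ cannot be an $Ig^*$-limit point of $P$. The only difference is that you spell out the complementation and the set computation more explicitly than the paper does.
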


\begin{proof}
Let $ P $ be an $ Ig^* $-closed set of an ideal $ \sigma $-space $ (Y,\eta,I) $ and $ p\in Y-P $, an $ Ig^* $-open set. So $ (Y-P) $ does not intersect $ P $, then $ p $ can not be an $ Ig^* $-limit point of $ P $ and $ p $ is any point outside $ P $. Hence the result follows.
\end{proof}

But the converse may not be true in general as revealed from the Example \ref{35}.

\begin{example}\label{35}
Example of a set $ D\supset D'_{Ig^*} $ but $ D $ is not $ Ig^* $-closed  in $ (Y,\eta,I) $.

Let $ Y=R $, the set of real numbers,  $ E_i $ are all countable subsets of irrational numbers, $   \eta=\{\emptyset, Y, E_i\} $ and $ I=\{\emptyset,\{\sqrt{2}\}\} $. Then $ (Y,\eta,I) $ is an ideal $ \sigma $-space. Consider a  set $ D=\{\sqrt{5}, \sqrt{7}, \sqrt{13}, \sqrt{17}, \sqrt{19}\} $, an open set but not a closed set. Therefore $ D $ is not $ Ig^* $-closed since $ I=\{\emptyset, \{\sqrt{2}\}\} $. Here a point $ p, p\not\in D $ can not be an $ Ig^* $-limit point of $ D $ implies $ D\supset D'_{Ig^*} $. Hence the result follows. 
\end{example}

\begin{lemma}\label{36}
Let $ D $ be a subset of an ideal $ \sigma $-space $ (Y,\eta,I) $, a point $ p\in \overline{D^{Ig^*}} $ if and only if every $ Ig^* $-open set containing $ p $ intersects $ D $.
\end{lemma}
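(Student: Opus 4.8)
The plan is to prove both implications by contraposition, exploiting the fact that $Ig^*$-openness and $Ig^*$-closedness are complementary notions (Definition \ref{8}) together with the description of $\overline{D^{Ig^*}}$ as the intersection of all $Ig^*$-closed supersets of $D$ (Definition \ref{30}). The recurring translation I would use is the elementary set-theoretic equivalence that an $Ig^*$-open set $U$ satisfies $U\cap D=\emptyset$ precisely when $D\subset Y-U$, where $Y-U$ is then $Ig^*$-closed.

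First I would establish necessity (the ``only if'' part) in contrapositive form. Suppose some $Ig^*$-open set $U$ with $p\in U$ fails to meet $D$, that is $U\cap D=\emptyset$. Then $D\subset Y-U$, and $Y-U$ is an $Ig^*$-closed set containing $D$ while $p\notin Y-U$. Since $\overline{D^{Ig^*}}$ is the intersection of all $Ig^*$-closed supersets of $D$, this single set $Y-U$ already excludes $p$, giving $p\notin\overline{D^{Ig^*}}$. Contraposing yields: if $p\in\overline{D^{Ig^*}}$, then every $Ig^*$-open set containing $p$ intersects $D$.

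For sufficiency (the ``if'' part) I would again argue by contraposition. Assume $p\notin\overline{D^{Ig^*}}$. By Definition \ref{30} the intersection defining $\overline{D^{Ig^*}}$ omits $p$, so there is at least one $Ig^*$-closed set $P\supset D$ with $p\notin P$. Then $Y-P$ is $Ig^*$-open, contains $p$, and satisfies $(Y-P)\cap D=\emptyset$ because $D\subset P$. Thus there exists an $Ig^*$-open set containing $p$ that misses $D$, which is exactly the negation of the hypothesis.

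There is no serious obstacle here; the argument is purely formal. The only point demanding care is to keep the duality between $Ig^*$-open and $Ig^*$-closed sets straight when passing to complements, and to recall that $\overline{D^{Ig^*}}$ need not itself be $Ig^*$-closed (as Example \ref{31} shows). Consequently I must phrase the exclusion of $p$ in terms of a single witnessing $Ig^*$-closed superset of $D$, rather than appealing to any closedness of $\overline{D^{Ig^*}}$.
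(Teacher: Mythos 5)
Your proposal is correct and follows essentially the same argument as the paper's proof: both directions rest on the observation that an $Ig^*$-open set missing $D$ has an $Ig^*$-closed complement containing $D$ (and vice versa), combined with Definition \ref{30}. The only difference is cosmetic --- you phrase both implications as contrapositives, while the paper phrases them as proofs by contradiction.
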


\begin{proof}
Assume $ p\in \overline{D^{Ig^*}} $. If $ p\in D $, then the case is trivial. So let $ p\in \overline{D^{Ig^*}}-D $ and let $ V $ be an $ Ig^* $-open set containing $ p $. If $ V $ does not intersect $ D $, then $ D\subset Y-V $, an $ Ig^* $-closed set and then $ \overline{D^{Ig^*}} $ does not contain the point $ p $, since $  \overline{D^{Ig^*}}\subset Y-V $ which is a contradiction that $ p\in \overline{D^{Ig^*}} $. Hence every $ Ig^* $-open set containing $ p $ intersects $ D $. 

Conversely, suppose the conditions hold and $ p\not\in \overline{D^{Ig^*}} $. Then  there exists an $ Ig^* $-closed set $ F $ containing $ D $ such that $ p\not\in F $. So $ p\in Y-F $, an $ Ig^* $-open set which does not intersect $ D $, a contradiction to the assumption. Hence  $ p\in \overline{D^{Ig^*}} $. 
\end{proof}

\begin{theorem}\label{37}
Let $ D $ be a subset of an ideal $ \sigma $-space $ (Y,\eta,I) $, then $ D'_{Ig^*}\cup D=\overline{D^{Ig^*}} $. 
\end{theorem}

\begin{proof}
Let $ p\in  D'_{Ig^*}\cup D $. If $ p\in D $, then $ p\in \overline{D^{Ig^*}} $. So let $ p\in D'_{Ig^*} $ and $ p\not\in \overline{D^{Ig^*}} $. Then there exists an $ Ig^* $-closed set $ F $ containing $ D $ such that $ p\not\in F $.  So $ p\in Y-F $, an $ Ig^* $-open set which does not intersect $ D \Rightarrow   p\not\in  D'_{Ig^*} $, a contradiction that $ p $ is an $ Ig^* $-limit point of $ D $. Hence $ p\in \overline{D^{Ig^*}}\Rightarrow D'_{Ig^*}\cup D\subset \overline{D^{Ig^*}} $. Conversely, let $ p\in \overline{D^{Ig^*}} $. Then by Lemma \ref{36}, either $ p\in D $ or $ p\in D'_{Ig^*}\Rightarrow\overline{D^{Ig^*}}\subset D'_{Ig^*}\cup D $. Hence $ \overline{D^{Ig^*}}=D'_{Ig^*}\cup D $.  
\end{proof}

\begin{theorem}\label{38}
Every subspace of $ Ig^*$-$T_1 $ ideal $ \sigma $-space $ (Y,\eta,I) $ is $ Ig^*$-$T_1 $. 
\end{theorem}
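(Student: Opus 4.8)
The plan is to argue directly from Definition \ref{28}(II), transporting a separation that already exists in $Y$ down to an arbitrary subspace. Fix a subspace $C\subset Y$ carrying the induced structure $\eta_C=\{E\cap C:E\in\eta\}$ together with the relative ideal $I_C=\{O\subset C:O\in I\}$, exactly as used in Theorems \ref{20} and \ref{27}. Take two distinct points $p,q\in C$. Regarding them as distinct points of $Y$ and invoking the hypothesis that $(Y,\eta,I)$ is $Ig^*$-$T_1$, I would first produce $Ig^*$-open sets $U,V$ of $Y$ with $p\in U,\ q\not\in U,\ q\in V,\ p\not\in V$.

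The heart of the argument is then a restriction step: to show that $U\cap C$ and $V\cap C$ are $Ig^*$-open \emph{relative to} $C$ with respect to $I_C$. Granting this, we have $p\in U\cap C,\ q\not\in U\cap C,\ q\in V\cap C,\ p\not\in V\cap C$, so the two traces witness the $Ig^*$-$T_1$ separation of $p$ and $q$ inside $C$; since $p,q$ were an arbitrary pair of distinct points of $C$, the subspace is $Ig^*$-$T_1$. To carry out the restriction I would lean on the characterization in Theorem \ref{22}. Since $U$ is $Ig^*$-open in $Y$, there is an open set $W\subset U$ such that every set $Q$ closed in $Y$ with $Q\subset U$ satisfies $Q-G\subset W$ for some $G\in I$. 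The natural candidate witness for $U\cap C$ is $W\cap C$, which is open in $C$ and contained in $U\cap C$. Given a set $P$ closed in $C$ with $P\subset U\cap C$, one writes $P=K\cap C$ with $K$ closed in $Y$; because $P\subset C$, it suffices to find $G'\in I_C$ with $P-G'\subset W$, i.e. to show $P-W\in I_C$, in order to obtain $P-G'\subset W\cap C$ and conclude via Theorem \ref{22}. The set $V\cap C$ is handled by the identical computation.

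The main obstacle is precisely the verification that $P-W\in I$ for such a $P$. The control provided by $W$ applies only to sets closed \emph{in} $Y$ that sit inside $U$, whereas $P$ is merely closed in $C$, so before the $Y$-level estimate can be applied one must capture $P$ inside a closed-in-$Y$ subset of $U$ (for instance by intersecting $K$ with a closed set manufactured from $U$) and then manage the portion of an $Ig^*$-closed set that escapes $C$. Here I expect to use the downward closure of the ideal (condition (iii) of the definition of an ideal) together with the inclusion $I_C\subset I$ to absorb the stray set into $I_C$. This passage from closed-in-$C$ to closed-in-$Y$ is the delicate point of the proof, and it is where the structure of the subspace ideal $I_C$ must be used essentially.
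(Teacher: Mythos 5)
Your proposal follows the same route as the paper's own proof: produce the separating $Ig^*$-open sets $U,V$ in $Y$ and pass to the traces $U\cap C$, $V\cap C$ on the subspace. The paper disposes of the crucial step with the bare assertion that these traces ``are also $Ig^*$-open sets in $M$ (due to subspace)''; you, by contrast, correctly recognize that this restriction step is the entire content of the theorem and must be proved. But your attempt does not prove it: you reduce the problem to showing $P-W\in I_C$ for every $P$ closed in $C$ with $P\subset U\cap C$, observe that the witness $W$ supplied by Theorem \ref{22} only controls sets that are closed \emph{in $Y$} and contained in $U$, and then state that you ``expect'' to bridge the passage from closed-in-$C$ to closed-in-$Y$ using downward closure of the ideal. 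That bridge is never built, so the proposal has a genuine gap rather than an omitted routine verification.

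Moreover, the gap cannot be filled, because the restriction claim --- and in fact the theorem as stated --- is false. Take $Y=\{a,b,c,d\}$ with $\eta=\{\emptyset,\{a,b\},Y\}$ and $I=\{\emptyset\}$. Every singleton of $Y$ is $Ig^*$-open: for instance $\{b,c,d\}$ is $Ig^*$-closed because the only closed set and the only open set containing it are both $Y$, and similarly for the complements of the other singletons; hence $(Y,\eta,I)$ is $Ig^*$-$T_1$. Now take the subspace $M=\{a,b,d\}$, whose open sets are $\emptyset,\{a,b\},M$, whose closed sets are $\emptyset,\{d\},M$, and whose relative ideal is $I_M=\{\emptyset\}$. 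An $Ig^*$-open subset of $M$ containing $d$ but not $a$ would require an $Ig^*$-closed set $D$ with $a\in D\subset\{a,b\}$; but for $D=\{a\}$ or $D=\{a,b\}$ the only closed-in-$M$ superset is $M$ itself, while $\{a,b\}$ is open in $M$ and $M-\{a,b\}=\{d\}\notin I_M$, so no such $D$ is $Ig^*$-closed. Hence $M$ is not $Ig^*$-$T_1$ although $Y$ is. Concretely, the $Ig^*$-open sets of $Y$ separating $d$ from $a$ are $\{d\}$ and $\{b,d\}$, and their traces on $M$ fail to be $Ig^*$-open in $M$ --- exactly the step at which both your argument and the paper's break down. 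So you have in fact located the precise flaw in the published proof; the honest conclusion is that no argument along these lines can succeed, and the statement needs additional hypotheses on the subspace (compare Theorems \ref{20} and \ref{27}, where the subspace is assumed open and $g^*$-closed, respectively closed and $Ig^*$-open).
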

\begin{proof}
Let $ (Y,\eta,I) $ be an $ Ig^*$-$T_1 $ ideal $ \sigma $-space, $ M $ is a subspace of $ Y $ and $ p,q\in M, p\not=q $.  Then there are $ Ig^* $-open sets $ U $ and $ V $ such that $ p\in U, q\not\in U, q\in V, p\not\in V $. Now $ M\cap U $ and $ M\cap V $ are also $ Ig^* $-open sets in $ M $ (due to subspace) and $ p\in M\cap U, q\not\in M\cap U, q\in M\cap V, p\not\in M\cap V $, thus the subspace $ M $ is $ Ig^*$-$T_1 $.
\end{proof}

\begin{note}\label{39}
It can be proved in a similar way as Theorem \ref{38} that every subspace of $ Ig^*$-$T_0 $ ideal $ \sigma $-space $ (Y,\eta,I) $ is $ Ig^*$-$T_0 $. We can easily verify that $ T_0 $ and  $ T_1 $ $ \sigma $-spaces are respectively $ Ig^*$-$T_0 $ and $ Ig^*$-$T_1 $ ideal $ \sigma $-spaces and each $ Ig^*$-$T_1 $ ideal $ \sigma $-space is $ Ig^*$-$T_0 $. 
But converses may not be always true as seen from the Example \ref{40} (i), \ref{40} (ii) and \ref{40} (iii). 
\end{note}

\begin{example}\label{40} (i):
Example of an $ Ig^*$-$T_0 $ ideal $ \sigma $-space which is not $ T_0 $ $ \sigma $-space.

Suppose $ Y=R-Q,  \eta=\{\emptyset, Y, E_i\cup\{\sqrt{2},\sqrt{3}\}\}, E_i $ are all countable subsets of $ Y $ and $ I=\{\emptyset, \{\sqrt{7}\}\} $. Then $ (Y,\eta,I) $ is an ideal $ \sigma $-space. Now by Theorem \ref{22}, each singleton is an $ Ig^* $-open set since $ \{\emptyset\} $ is an open set as well as a closed set. Hence $ (Y,\eta,I) $ is an $ Ig^*$-$T_0 $ ideal $ \sigma $-space but not a $ T_0 $ $ \sigma $-space which can be verified by considering a pair of distinct points $ \sqrt{2},\sqrt{3} $.

(ii): Example of an $ Ig^*$-$T_1 $ ideal $ \sigma $-space which is not $ T_1 $ $ \sigma $-space.

Consider the Example \ref{18} where by Theorem \ref{22}, each singleton is an $ Ig^* $-open set since $ \{\emptyset\} $ is an open set as well as a closed set. Hence $ (Y,\eta,I) $ is an $ Ig^*$-$T_1 $ ideal $ \sigma $-space but not a $ T_1 $ $ \sigma $-space which may be checked by  considering a pair of distinct points $ \sqrt{2},\sqrt{11} $. 

(iii):  Example of an $ Ig^*$-$T_0 $ ideal $ \sigma $-space which is not $ Ig^*$-$T_1 $.

Suppose $ Y=\{a,b,c\}, \eta= \{\emptyset, Y, \{b,c\}\} $ and $ I=\{\emptyset,b\} $. Then $ (Y,\eta,I) $ is  an ideal $ \sigma $-space. Here, a family of $ Ig^* $-open sets is $ \{\emptyset,Y,\{b\}, \{c\},\{b.c\}\} $. Clearly, $ (Y,\eta,I) $ is an $ Ig^*$-$T_0 $ ideal $ \sigma $-space but not an $ Ig^*$-$T_1 $. For, consider a pair of distinct points $ a,b $. Then the only $ Ig^* $-open set containing $ a $ is the set $ Y $ which contains the point $ b $ also.   
\end{example}

\begin{theorem}\label{41}
If each singleton of an ideal $ \sigma $-space $ (Y,\eta,I) $ is $ Ig^* $-closed, then it  is $ Ig^*$-$T_1 $. 

Proof is straight forward, so is omitted.
\end{theorem}
But it is not reversible as it can be seen from the Example \ref{10} (i) .

\begin{definition}\label{42}
An ideal $ \sigma $-space is said to be $ Ig^*$-$T_\omega $ if every $ Ig^*$-closed  set is closed.
\end{definition}

\begin{theorem}\label{43}   An
$ Ig^*$-$T_\omega $ ideal $ \sigma $-space $ (Y,\eta,I) $ is $ Ig^*$-$T_0 $.
\end{theorem}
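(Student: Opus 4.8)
The plan is to deduce the separation property point by point from the singleton dichotomy of Theorem \ref{16}, so that the $Ig^*$-$T_\omega$ hypothesis enters only to simplify the identification of $Ig^*$-open sets. First I would unpack Definition \ref{42}: by Remark \ref{9} every closed set is $Ig^*$-closed, and by the $Ig^*$-$T_\omega$ assumption every $Ig^*$-closed set is closed, so in such a space the closed sets and the $Ig^*$-closed sets coincide, and hence, taking complements, the open sets and the $Ig^*$-open sets coincide as well. This observation lets me certify any separating set I produce as $Ig^*$-open merely by checking that it is open in $(Y,\eta)$.

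I would then fix distinct points $p,q \in Y$ and apply Theorem \ref{16} to $p$. In the first alternative $\{p\}$ is closed, so $Y-\{p\}$ is open and therefore $Ig^*$-open; it contains $q$ but not $p$. In the second alternative $Y-\{p\}$ is $Ig^*$-closed, and by the coincidence noted above it is in fact closed, so $\{p\}$ is open and hence $Ig^*$-open; it contains $p$ but not $q$. In either case I have produced an $Ig^*$-open set containing exactly one of the two points, which is all that Definition \ref{28}(I) demands, so the space is $Ig^*$-$T_0$.

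I do not expect any genuine obstacle: the real content of the argument is already carried by Theorem \ref{16}, and the $Ig^*$-$T_\omega$ hypothesis is used only to turn the $Ig^*$-closed set $Y-\{p\}$ back into a closed set, equivalently to make $\{p\}$ open, in the second case. It is worth remarking that even this use can be bypassed, since the second alternative says exactly that $Y-\{p\}$ is $Ig^*$-closed, whence $\{p\}$ is $Ig^*$-open directly from Definition \ref{8}; thus the conclusion in fact holds in every ideal $\sigma$-space, and the $Ig^*$-$T_\omega$ hypothesis is a convenience rather than a necessity here. The only care needed is bookkeeping over which of $p,q$ the chosen set contains, and since Definition \ref{28}(I) is symmetric in that choice, both alternatives complete the proof.
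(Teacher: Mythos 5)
Your proof is correct, and it takes a genuinely different route from the paper's. The paper proceeds by contradiction through the closure characterization of Theorem \ref{33}: assuming the space is not $Ig^*$-$T_0$, it extracts distinct points $p,q$ with $\overline{\{p\}^{Ig^*}}=\overline{\{q\}^{Ig^*}}$, argues that $\{p\}$ cannot be closed, invokes Theorem \ref{16} together with the $Ig^*$-$T_\omega$ hypothesis to conclude that $Y-\{p\}$ is closed, and then derives a contradiction from $\overline{\{q\}^{Ig^*}}\subset\overline{(Y-\{p\})^{Ig^*}}=Y-\{p\}$. You instead exhibit the separating $Ig^*$-open set directly from the dichotomy of Theorem \ref{16}, using the $Ig^*$-$T_\omega$ hypothesis only to identify open sets with $Ig^*$-open sets; this bypasses Theorem \ref{33}, Lemma \ref{32}, and the whole $Ig^*$-closure apparatus, so your argument is shorter and more elementary. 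Moreover, your closing observation is correct and strictly stronger than the theorem as stated: if $\{p\}$ is closed, it is $Ig^*$-closed by Remark \ref{9}, so $Y-\{p\}$ is $Ig^*$-open by Definition \ref{8} and contains $q$ but not $p$; if $\{p\}$ is not closed, Theorem \ref{16} makes $Y-\{p\}$ $Ig^*$-closed, so $\{p\}$ is $Ig^*$-open by Definition \ref{8} and contains $p$ but not $q$. Hence every ideal $\sigma$-space is $Ig^*$-$T_0$ under the paper's definitions, and the $Ig^*$-$T_\omega$ hypothesis in Theorem \ref{43} is redundant --- a fact the paper's indirect proof obscures. What the paper's route buys is only that it exercises the machinery of Theorem \ref{33}; it yields no extra generality, while your direct construction reveals that the separation axiom $Ig^*$-$T_0$ is automatically satisfied.
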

\begin{proof}
Assume $ (Y,\eta,I) $ is an $ Ig^*$-$T_\omega $ ideal $ \sigma $-space and it is not  $ Ig^*$-$T_0 $. Then by Theorem \ref{32}, there is one pair of points $ p,q; p\not=q $ such that $ \overline{\{p\}^{Ig^*}}= \overline{\{q\}^{Ig^*}} $. Now $ \{p\}\cap \{q\}=\emptyset\Rightarrow \{q\}\subset Y-\{p\} $. We claim that $ \{p\} $ is not closed. For, if $ \{p\} $ is closed then it is $ Ig^* $-closed, so $ \overline{\{p\}^{Ig^*}}=\{p\} $ and hence $ \overline{\{q\}^{Ig^*}}=\{p\} $ which is not possible. Therefore by Theorem \ref{16}, $ Y-\{p\} $ is $ Ig^* $-closed and so is closed, since $ (T,\eta,I) $ is $ Ig^*$-$T_\omega $ space,  which is not also possible, since $ \{q\}\subset Y-\{p\}\Rightarrow  \overline{\{q\}^{Ig^*}}\subset \overline{(Y-\{p\})^{Ig^*}}=Y-\{p\}\Rightarrow  \overline{\{q\}^{Ig^*}}\not= \overline{\{p\}^{Ig^*}} $, a contradiction to the assumption. Hence $ \overline{\{p\}^{Ig^*}}\not= \overline{\{q\}^{Ig^*}} $ and so the ideal $ \sigma $-space is $ Ig^*$-$T_0 $.    
\end{proof}

\begin{remark}\label{44}
But converse may not be true as it can be verified from Example \ref{10} (i) which shows that the ideal $ \sigma $-space $ (Y,\eta, I) $ is $ Ig^*$-$T_1 $ so it is  $ Ig^*$-$T_0  $ but it is not $ Ig^*$-$T_\omega $.
\end{remark}

\begin{example}\label{45} Example of an
$ Ig^*$-$T_\omega $ ideal $ \sigma $-space which is not $ Ig^*$-$T_1 $.

Let $ Y=\{p,q\}, \eta=\{\emptyset, Y,\{q\}\} $ and $ I=\{\emptyset\} $. Then $ (Y,\eta, I) $ is an ideal $ \sigma $-space. Here the $ Ig^* $-closed sets are $ \{\{\emptyset\}, \{p\}, Y\} $ and these sets are also closed sets, so $ (Y,\eta, I) $ is an $ Ig^*$-$T_\omega $ ideal $ \sigma $-space but it is not $ Ig^*$-$T_1 $ since $ Ig^* $-open sets are $ \emptyset, Y $ and $\{q\} $.  
\end{example}

Now we give a chacterization for the trivial ideal $ \sigma $-space to be an $ Ig^*$-$T_\omega $.

\begin{theorem}\label{46}
:  An ideal $ \sigma $-space $ (Y, \eta,I) $ with $ I=\{\emptyset\} $ is $ Ig^*$-$T_\omega $ if and only if 

(i)  for each $ p \in Y $,  $ \{p\} $ is either open or closed 

(ii)  $ K =K_I^* $

where $ K =\{D: \overline{(Y-D)}$ is closed\} and $ K_I^*=\{D:\overline{(Y-D)^{Ig^*}}$ is $ Ig^*$-closed\}.
\end{theorem}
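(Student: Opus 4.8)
The statement is a biconditional, and the governing observation is that in an $ Ig^*$-$T_\omega $ space the $ Ig^* $-closed sets are \emph{exactly} the closed sets: every closed set is $ Ig^* $-closed by Remark \ref{9}, and the $ T_\omega $ hypothesis (Definition \ref{42}) supplies the reverse inclusion. The plan is to exploit this coincidence of the two families for the forward implication, and to run a pointwise argument built on Theorem \ref{11} for the converse.

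For the forward direction I assume every $ Ig^* $-closed set is closed. To obtain (i), fix $ p\in Y $ and apply Theorem \ref{16}: either $ \{p\} $ is closed, or $ Y-\{p\} $ is $ Ig^* $-closed, in which case the $ T_\omega $ property forces $ Y-\{p\} $ to be closed and hence $ \{p\} $ to be open. To obtain (ii), I note that since the $ Ig^* $-closed sets coincide with the closed sets, the two intersections defining $ \overline{A^{Ig^*}} $ and $ \overline{A} $ run over the same collection, so $ \overline{(Y-D)^{Ig^*}}=\overline{(Y-D)} $ for every $ D $, while ``$ Ig^* $-closed'' and ``closed'' become interchangeable predicates. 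Chaining these equivalences shows $ D\in K_I^* $ iff $ \overline{(Y-D)} $ is closed iff $ D\in K $, giving $ K=K_I^* $.

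For the converse I assume (i), (ii) and $ I=\{\emptyset\} $, and let $ C $ be $ Ig^* $-closed; the goal is that $ C $ is closed. Since $ I $ is trivial, Theorem \ref{11} furnishes a closed set $ P $ with $ C\subset P\subset ker(C) $, whence $ \overline{C}\subset \overline{P}=P $ by monotonicity of closure. The first step (using (i)) is to prove $ \overline{C}=C $: if $ x\in\overline{C}-C $ then $ \{x\} $ cannot be open, since an open singleton containing $ x\in\overline{C} $ would meet $ C $ and force $ x\in C $; so by (i) $ \{x\} $ is closed, $ Y-\{x\} $ is an open set containing $ C $ but missing $ x $, hence $ x\notin ker(C)\supset P\supset\overline{C} $, contradicting $ x\in\overline{C} $. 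Thus $ \overline{C}-C=\emptyset $. The second step (using (ii)) is to prove $ \overline{C} $ is closed: because $ C $ is $ Ig^* $-closed we have $ \overline{C^{Ig^*}}=C $ (the smallest $ Ig^* $-closed set containing $ C $ is $ C $ itself), so setting $ D=Y-C $ gives $ \overline{(Y-D)^{Ig^*}}=C $, an $ Ig^* $-closed set, i.e. $ D\in K_I^* $; by (ii) $ D\in K $, which is exactly the assertion that $ \overline{C}=\overline{(Y-D)} $ is closed. Combining the two steps, $ C=\overline{C} $ is closed.

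The routine parts are the forward direction and the second step of the converse, which amount to bookkeeping with the definitions of $ K $, $ K_I^* $ and $ Ig^* $-closure. The crux, and the step I expect to be the main obstacle to state cleanly, is the first step of the converse, namely the reduction $ \overline{C}=C $ via the behaviour of $ ker(C) $. Its delicacy is that in a $ \sigma $-space the equality $ \overline{C}=C $ does \emph{not} by itself make $ C $ closed, since arbitrary intersections of closed sets need not be closed; this is precisely why condition (ii) is indispensable for upgrading $ \overline{C}=C $ to the closedness of $ C $. The interplay of (i), which controls the points of $ \overline{C}-C $ through $ ker(C) $, and (ii), which certifies that $ \overline{C} $ is genuinely closed, is the heart of the argument.
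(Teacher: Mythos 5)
Your proposal is correct, and its overall architecture coincides with the paper's: the forward direction (Theorem \ref{16} for (i), the coincidence of the families of closed and $Ig^*$-closed sets for (ii)) and the use of (ii) to show $\overline{C}$ is closed are exactly the paper's steps. The one place you genuinely diverge is the crux you yourself flagged, the proof that $\overline{C}=C$. The paper takes $p\in\overline{C}-C$ and first rules out ``$\{p\}$ closed'': since $C$ is $Ig^*$-closed and the ideal is trivial, Theorem \ref{14} forces every closed subset of $P-C$ to lie in $I=\{\emptyset\}$, so a closed singleton $\{p\}\subset P-C$ would be empty; then (i) makes $\{p\}$ open, so $Y-\{p\}$ is a closed set containing $C$ but missing $p$, contradicting $p\in\overline{C}$. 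You instead rule out ``$\{x\}$ open'' directly (an open singleton containing a point of $\overline{C}$ must meet $C$), let (i) make $\{x\}$ closed, and then contradict $x\in\overline{C}\subset P\subset ker(C)$ using the kernel characterization of Theorem \ref{11}. The two arguments are dual: each kills one branch of the dichotomy in (i), and each invokes the trivial-ideal hypothesis through a different characterization of $Ig^*$-closedness --- the paper through Theorem \ref{14} (closed subsets of $P-C$ lie in $I$), you through Theorem \ref{11} (some closed $P\supset C$ sits inside $ker(C)$). Both are valid and of comparable length; yours makes explicit where the kernel enters, the paper's avoids the kernel entirely. Your closing observation --- that in a $\sigma$-space $\overline{C}=C$ alone does not yield closedness of $C$, so condition (ii) is indispensable for upgrading it --- is exactly the right reading of why both directions of work are needed, and it matches the paper's division of labour between (i) and (ii).
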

\begin{proof}
Suppose $ (Y, \eta,I) $ is an $ Ig^*$-$T_\omega $ ideal $ \sigma $-space. 

(i) Let $ p\in Y $ and $ \{p\} $ be not closed. Then by Theorem \ref{16}, $ Y-\{p\} $ is $ Ig^* $-closed and for $ Ig^*$-$T_\omega $ ideal $ \sigma $-space, $ Y-\{p\} $ is closed and hence $ \{p\} $ is open.

(ii) Let $ D\in K\Rightarrow \overline{(Y-D)} $ is closed $ \Rightarrow \overline{(Y-D)} $ is $ Ig^* $-closed $ \Rightarrow  \overline{(Y-D)^{Ig^*}}=\overline{(Y-D)} $ is $ Ig^* $-closed $ \Rightarrow D\in K_I^*\Rightarrow K \subset K_I^*  $. On the other hand, suppose $ D \subset K_I^* \Rightarrow \overline{(Y-D)^{Ig^*}} $ is $ Ig^* $-closed $ \Rightarrow \overline{(Y-D)} $ is closed $ \Rightarrow D\in K \Rightarrow  K_I^*\subset K $. Hence $ K= K_I^* $.

Conversely, assume the conditions (i) and (ii) hold and $ D $ be any $ Ig^* $-closed set. So $ \overline{D^{Ig^*}}=D $ is $ Ig^* $-closed, then $ (Y-D)\in K_I^*=K\Rightarrow \overline{D} $ is closed. We claim that $ \overline{D}=D $. If not, then there exists $ p\in \overline{D}-D $. Now $ \{p\} $ is not closed, if $ \{p\} $ is closed then by Theorem \ref{14}, $ \{p\}\in I=\{\emptyset\} $ which contradicts that $ \{p\} $ is non-empty. Then by (i), $ \{p\} $ is open $ \Rightarrow Y-\{p\} $ is closed. But $ \{p\}\not\in D\Rightarrow D\subset (Y-\{p\})\Rightarrow \overline{D}\subset \overline{(Y-\{p\})}=Y-\{p\} $, a contradiction. Thus $ \overline{D}=D $. Hence $ D $ is closed  and the result follows. 
\end{proof}

\begin{corollary}\label{47}
An ideal $ \sigma $-space $ (Y, \eta,I) $ with $ I=\{\emptyset\} $ is $ Ig^*$-$T_\omega $ if and only if 

(i)  every subset of $ Y $ is the intersection of all open sets and all  closed sets containing it and

(ii)  $ K =K_I^* $
where $ K $ and $ K_I^* $ are defined as Theorem \ref{46}.
\end{corollary}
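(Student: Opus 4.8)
The plan is to deduce this corollary directly from Theorem \ref{46}, observing that condition (ii) is verbatim the same in both statements, so the only work is to show that condition (i) of the corollary is just a reformulation of condition (i) of Theorem \ref{46}. Thus it suffices to prove, with no reference to the ideal or to the $Ig^*$-$T_\omega$ property, the purely point-set equivalence: for each $p\in Y$, $\{p\}$ is either open or closed if and only if every subset $A\subseteq Y$ satisfies $A = ker(A)\cap\kappa(A)$, where $ker(A)=\bigcap\{U: A\subseteq U,\ U\text{ open}\}$ is as in Definition \ref{6} and $\kappa(A)=\bigcap\{F: A\subseteq F,\ F\text{ closed}\}$ denotes the intersection of all closed sets containing $A$. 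Once this equivalence is in hand, substituting the reformulated condition (i) into Theorem \ref{46} yields the corollary immediately.

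First I would prove the forward direction. Assume every singleton is open or closed and fix $A\subseteq Y$. The inclusion $A\subseteq ker(A)\cap\kappa(A)$ is trivial, since $A$ lies inside every open and every closed superset of itself. For the reverse inclusion I take an arbitrary point $x\notin A$ and separate it from $A$: if $\{x\}$ is closed then $Y-\{x\}$ is an open set containing $A$ but not $x$, whence $x\notin ker(A)$; if $\{x\}$ is open then $Y-\{x\}$ is a closed set containing $A$ but not $x$, whence $x\notin\kappa(A)$. In either case $x\notin ker(A)\cap\kappa(A)$, so $ker(A)\cap\kappa(A)\subseteq A$ and equality follows.

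Next I would prove the converse. Assuming every subset equals the intersection of its open and closed hulls, I apply the hypothesis to the particular set $A=Y-\{p\}$ for an arbitrary $p\in Y$. Since $p\notin A$, the equality $A=ker(A)\cap\kappa(A)$ forces $p\notin ker(A)$ or $p\notin\kappa(A)$. In the first case there is an open set $U\supseteq Y-\{p\}$ with $p\notin U$, which forces $U=Y-\{p\}$, so $\{p\}$ is closed; in the second case there is a closed set $F\supseteq Y-\{p\}$ with $p\notin F$, forcing $F=Y-\{p\}$, so $\{p\}$ is open. This establishes the equivalence and completes the reduction.

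I do not expect a serious obstacle, as both directions are short separation arguments. The one point requiring care is to work with the intersection of closed supersets $\kappa(A)$ exactly as written in the statement, rather than with the limit-point closure $\overline{A}$, which in a $\sigma$-space need not be closed and need not coincide with $\kappa(A)$; keeping these two notions distinct is precisely what makes condition (i) of the corollary match condition (i) of Theorem \ref{46}.
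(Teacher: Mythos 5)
Your proof is correct and follows exactly the route the paper intends: Corollary \ref{47} is stated without its own proof as an immediate consequence of Theorem \ref{46}, and the purely point-set equivalence you establish --- for each $p\in Y$, $\{p\}$ is open or closed if and only if every $A\subseteq Y$ equals $ker(A)\cap\kappa(A)$ --- is precisely the missing link, and your separation arguments in both directions are valid in a $\sigma$-space since they use only the duality between open and closed sets. One correction to your closing caveat: in a $\sigma$-space the limit-point closure $\overline{A}$ \emph{does} coincide with $\kappa(A)$, the intersection of all closed supersets of $A$ (indeed $x\notin\kappa(A)$ iff some closed $F\supset A$ omits $x$ iff the open set $Y-F$ contains $x$ and misses $A$); what fails in a $\sigma$-space is only that this common set need not itself be closed, which is harmless here because your argument works with $\kappa(A)$ directly and never needs it to be closed.
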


\begin{theorem}\label{48}
If $ (Y,\eta, I) $ be an $ Ig^*$-$T_0 $ ideal $ \sigma $-space, then for every pair of distinct points $ p,q\in Y $, either $ p\not\in \overline{\{q\}^{Ig^*}} $ or $ q\not\in \overline{\{p\}^{Ig^*}} $.
\end{theorem}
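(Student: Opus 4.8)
The plan is to read off the conclusion directly from the defining property of an $ Ig^*$-$T_0 $ space in Definition \ref{28}(I) together with the neighbourhood characterization of $ Ig^* $-closure supplied by Lemma \ref{36}. Given a pair of distinct points $ p,q\in Y $, the $ Ig^*$-$T_0 $ axiom hands me an $ Ig^* $-open set $ U $ containing exactly one of the two points but not the other. I would then split into the two symmetric cases according to which of $ p,q $ lies in $ U $.

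In the first case, suppose $ p\in U $ and $ q\not\in U $. Then $ U $ is an $ Ig^* $-open set containing $ p $, and because $ q\not\in U $ we have $ U\cap\{q\}=\emptyset $; that is, $ U $ is an $ Ig^* $-open neighbourhood of $ p $ that fails to intersect $ \{q\} $. By the contrapositive of Lemma \ref{36}, this forces $ p\not\in\overline{\{q\}^{Ig^*}} $. In the second case, $ q\in U $ and $ p\not\in U $, and running the identical argument with the roles of $ p $ and $ q $ interchanged gives an $ Ig^* $-open neighbourhood of $ q $ missing $ \{p\} $, whence $ q\not\in\overline{\{p\}^{Ig^*}} $. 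Since exactly one of the two cases must occur, in either situation at least one of the two required non-membership relations holds, which is precisely the claimed disjunction.

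The argument is essentially immediate once Lemma \ref{36} is invoked, so there is no serious obstacle here; the result is a direct translation of the $ Ig^*$-$T_0 $ separation condition into the language of $ Ig^* $-closures. The only point that needs a little care is bookkeeping: one must track which of the two points is the one separated out by $ U $, so that the correct non-membership statement (for $ p $ or for $ q $) is deduced in each of the two cases.
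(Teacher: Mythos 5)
Your proposal is correct and takes essentially the same approach as the paper: invoke the $Ig^*$-$T_0$ axiom to get an $Ig^*$-open set separating $p$ from $q$, then translate non-membership in that set into non-membership in the corresponding $Ig^*$-closure. In fact your write-up is more careful than the paper's own proof, which compresses the final step (the appeal to Lemma \ref{36}, i.e.\ that an $Ig^*$-open neighbourhood of $p$ missing $\{q\}$ forces $p\not\in\overline{\{q\}^{Ig^*}}$) into a single unexplained sentence.
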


\begin{proof}
Let $ (Y,\eta, I) $ be an $ Ig^*$-$T_0 $ ideal $ \sigma $-space and  $ p,q\in Y $ be a pair of distinct points. So either there exists an $ Ig^* $-open set $ U $ such that $ p\in U, q\not\in U $ or,  there exists an $ Ig^* $-open set $ V $ such that $ q\in V, p\not\in V $. Since $ p\not=q $ either $ q\not\in \overline{\{p\}^{Ig^*}} $ or, $  p\not\in \overline{\{q\}^{Ig^*}} $. 
\end{proof}

Now we introduce the next definition for an $ Ig^*$-$T_0 $ ideal $ \sigma $-space to be $ Ig^*$-$T_1 $.  

\begin{definition}\label{49}
An ideal $ \sigma $-space $ (Y,\eta,I) $ is said to be an $ Ig^* $-symmetric if for $ p,q\in Y,  p\in \overline{\{q\}^{Ig^*}}\Rightarrow q\in \overline{\{p\}^{Ig^*}} $.
\end{definition}

\begin{theorem}\label{50}
An $ Ig^* $-symmetric and $ Ig^*$-$T_0 $ ideal $ \sigma $-space $ (Y,\eta,I) $ is $ Ig^*$-$T_1 $.
\end{theorem}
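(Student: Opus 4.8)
The plan is to combine the dichotomy supplied by Theorem \ref{48} with the $ Ig^* $-symmetry hypothesis and then convert the resulting non-membership statements into the two separating $ Ig^* $-open sets demanded by $ Ig^*$-$T_1 $ via Lemma \ref{36}. First I would fix a pair of distinct points $ p,q\in Y $. Since the space is $ Ig^*$-$T_0 $, Theorem \ref{48} guarantees that at least one of $ p\notin \overline{\{q\}^{Ig^*}} $ or $ q\notin \overline{\{p\}^{Ig^*}} $ holds. Because the whole configuration is symmetric in $ p $ and $ q $, it costs nothing to assume $ p\notin \overline{\{q\}^{Ig^*}} $; the remaining case is handled identically after interchanging the two points.

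Next I would invoke the $ Ig^* $-symmetry hypothesis to upgrade this single non-membership into two. Definition \ref{49} states $ p\in \overline{\{q\}^{Ig^*}}\Rightarrow q\in \overline{\{p\}^{Ig^*}} $ for all $ p,q $; applying it with the roles of $ p $ and $ q $ interchanged and passing to the contrapositive turns the assumption $ p\notin \overline{\{q\}^{Ig^*}} $ into $ q\notin \overline{\{p\}^{Ig^*}} $. Hence both non-memberships $ p\notin \overline{\{q\}^{Ig^*}} $ and $ q\notin \overline{\{p\}^{Ig^*}} $ hold simultaneously, which is precisely what is needed to produce two separate $ Ig^* $-open sets rather than just one.

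Finally I would translate each non-membership into an open set using the contrapositive of Lemma \ref{36}. Since $ p\notin \overline{\{q\}^{Ig^*}} $, Lemma \ref{36} yields an $ Ig^* $-open set $ U $ with $ p\in U $ that fails to meet $ \{q\} $, so that $ q\notin U $; symmetrically, $ q\notin \overline{\{p\}^{Ig^*}} $ yields an $ Ig^* $-open set $ V $ with $ q\in V $ and $ p\notin V $. The pair $ U,V $ then satisfies the requirements of Definition \ref{28}(II), so $ (Y,\eta,I) $ is $ Ig^*$-$T_1 $.

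The only delicate point in this otherwise short argument is orienting the symmetry implication correctly: one must apply Definition \ref{49} with $ p $ and $ q $ swapped and take the contrapositive, since the raw hypothesis $ p\notin \overline{\{q\}^{Ig^*}} $ gives no information about membership of $ q $ in $ \overline{\{p\}^{Ig^*}} $ without the symmetry assumption. Everything else is a routine invocation of the already-established Theorem \ref{48} and Lemma \ref{36}.
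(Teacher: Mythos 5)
Your proposal is correct and follows essentially the same route as the paper's proof: invoke Theorem \ref{48} to get one non-membership, use the contrapositive of the $Ig^*$-symmetry hypothesis to obtain the other, and then convert each non-membership into a separating $Ig^*$-open set. The only cosmetic difference is that you cite the contrapositive of Lemma \ref{36} for the last step, while the paper unwinds Definition \ref{30} directly (taking an $Ig^*$-closed set $F\supset\{q\}$ missing $p$ and complementing), which is the same argument.
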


\begin{proof}
Let $ (Y,\eta,I) $ be an  $ Ig^* $-symmetric and $ Ig^*$-$T_0 $ ideal $ \sigma $-space and let $ p,q\in Y; p\not=q $. Since $ (Y,\eta,I) $ is $ Ig^*$-$T_0 $ ideal $ \sigma $-space, by Theorem \ref{48}, we get either $ p\not\in \overline{\{q\}^{Ig^*}} $ or $ q\not\in \overline{\{p\}^{Ig^*}} $. Suppose $ p\not\in \overline{\{q\}^{Ig^*}} $. Then we claim that $ q\not\in \overline{\{p\}^{Ig^*}} $. For if $ q\in \overline{\{p\}^{Ig^*}} $, then $ p\in \overline{\{q\}^{Ig^*}} $, since the ideal $ \sigma $-space is $ Ig^* $-symmetric. But this is a contradiction. Since $ p\not\in \overline{\{q\}^{Ig^*}} $, there is an $ Ig^* $-closed set $ F, F\supset \{q\} $ such that $ p\not\in F\Rightarrow p\in Y-F $, an $ Ig^* $-open set and $ q\not\in Y-F $. Similarly, if $ q\not\in \overline{\{p\}^{Ig^*}} $, then there is an $ Ig^* $-closed set $ P, P\supset \{p\} $ such that $ q\not\in P\Rightarrow q\in Y-P $, an $ Ig^* $-open set and $ p\not\in Y-P $. Hence the ideal $ \sigma $-space is  $ Ig^*$-$T_1 $.  
\end{proof}

\begin{remark}
Remark \ref{44} and Example \ref{45} reveal that  $ Ig^*$-$T_\omega $ and $ Ig^*$-$T_1 $ axioms are independent of each other. So as in \cite{NL}, $ Ig^*$-$T_\omega $ axiom can not be placed between $ Ig^*$-$T_0 $ and $ Ig^*$-$T_1 $ axioms.  As in \cite{DP}, $ Ig^* $-symmetric $ Ig^*$-$T_1 $ ideal $ \sigma $-space may not be $ Ig^*$-$T_\omega $ which can be seen from Example \ref{10} (i). This Example also shows that singleton is not $ Ig^* $-closed in the said ideal $ \sigma $-space. If strongly $ Ig^*$-symmetryness is introduced as in \cite{DP} in an ideal $ \sigma $-space, then strongly $ Ig^*$-symmetric $ Ig^*$-$T_1 $ ideal $ \sigma $-space also fails to be $ Ig^*$-$T_\omega $ since strongly $ Ig^*$-symmetric ideal $ \sigma $-space is $ Ig^* $-symmetric which is seen in next theorem. Thus an equivalence relation among $ Ig^*$-$T_0 $, $ Ig^*$-$T_\omega, Ig^*$-$T_1 $ axioms is not found. But a relation among the axioms is observed under certain additional conditions.   
\end{remark}

\begin{definition}\label{52}
An ideal $ \sigma $-space $ (Y,\eta,I) $ is said to be strongly $ Ig^* $-symmetric if for each $ y\in Y,  \{y\} $ is $ Ig^* $-closed. 
\end{definition}

\begin{theorem}\label{53}
A strongly $ Ig^* $-symmetric ideal $ \sigma $-space $ (Y,\eta,I) $ with $ I=\{\emptyset\} $ is $ Ig^*$-symmetric.
\end{theorem}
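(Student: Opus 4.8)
The plan is to reduce the symmetry condition of Definition~\ref{49} to a single observation: under strong $Ig^*$-symmetry every singleton coincides with its own $Ig^*$-closure. Once this collapse is established, the implication $p\in\overline{\{q\}^{Ig^*}}\Rightarrow q\in\overline{\{p\}^{Ig^*}}$ becomes almost immediate, because each $Ig^*$-closure appearing in Definition~\ref{49} is then simply a singleton.

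First I would fix an arbitrary $y\in Y$. By Definition~\ref{52} the set $\{y\}$ is $Ig^*$-closed, so by Definition~\ref{30} it is one of the $Ig^*$-closed sets whose intersection forms $\overline{\{y\}^{Ig^*}}$; hence $\overline{\{y\}^{Ig^*}}\subset\{y\}$. Since always $\{y\}\subset\overline{\{y\}^{Ig^*}}$, this gives $\overline{\{y\}^{Ig^*}}=\{y\}$ for every $y\in Y$. Equivalently, one can reach the same conclusion through Theorems~\ref{34} and~\ref{37}: as $\{y\}$ is $Ig^*$-closed it contains all its $Ig^*$-limit points, so $\{y\}'_{Ig^*}\subset\{y\}$ and therefore $\overline{\{y\}^{Ig^*}}=\{y\}'_{Ig^*}\cup\{y\}=\{y\}$.

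Next I would verify Definition~\ref{49} directly. Let $p,q\in Y$ with $p\in\overline{\{q\}^{Ig^*}}$. By the first step $\overline{\{q\}^{Ig^*}}=\{q\}$, so $p\in\{q\}$, forcing $p=q$; then $q=p\in\{p\}=\overline{\{p\}^{Ig^*}}$, which is exactly $q\in\overline{\{p\}^{Ig^*}}$. Hence $(Y,\eta,I)$ is $Ig^*$-symmetric.

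I do not expect a genuine obstacle, since the argument is essentially the collapse of the $Ig^*$-closure operator on singletons; the only step demanding care is making that collapse precise from the intersection form of the closure in Definition~\ref{30}. It is worth remarking that this chain of reasoning never actually consumes the hypothesis $I=\{\emptyset\}$ — strong $Ig^*$-symmetry alone already forces $Ig^*$-symmetry — so I would retain the stated hypothesis for uniformity with the neighbouring results while flagging that it is not needed for the implication itself.
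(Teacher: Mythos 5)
Your proof is correct, and it is genuinely more direct than the paper's. Both arguments hinge on the same collapse — a strongly $Ig^*$-symmetric space has $\overline{\{y\}^{Ig^*}}=\{y\}$ for every $y$, which follows at once from Definition~\ref{30} since $\{y\}$ itself appears among the $Ig^*$-closed sets being intersected — but you exploit it immediately: the premise $p\in\overline{\{q\}^{Ig^*}}=\{q\}$ forces $p=q$, making the symmetry condition of Definition~\ref{49} vacuously easy. The paper instead runs a proof by contradiction: assuming $q\notin\overline{\{p\}^{Ig^*}}$, it extracts an $Ig^*$-closed set $F\ni p$ with $q\notin F$, then invokes strong $Ig^*$-symmetry together with $I=\{\emptyset\}$ to produce a closed set $P\supset\{q\}$ with $P-(Y-F)\in I$, hence $P\subset Y-F$, and finally uses the same identity $\{q\}=\overline{\{q\}^{Ig^*}}$ to reach $p\in Y-F$, contradicting $p\in F$. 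Your route buys two things. First, as you correctly flag, the hypothesis $I=\{\emptyset\}$ is never needed: strong $Ig^*$-symmetry alone implies $Ig^*$-symmetry. Second, your argument avoids the one delicate step in the paper's proof: there, Definition~\ref{8} is applied to $\{q\}$ with the set $Y-F$, but $Y-F$ is only $Ig^*$-open, not open, so the conclusion $P-(Y-F)\in I$ is not actually licensed by the definition; your proof never touches Definition~\ref{8} at all, working purely with the closure operator, and is therefore both more general and more watertight.
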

\begin{proof}
Suppose for any $ p,q\in Y, p\in \overline{\{q\}^{Ig^*}} $ but $ q\not\in \overline{\{p\}^{Ig^*}} $. Then there is an $ Ig^*$-closed set $ F $ containing $ \{p\} $ such that $ q\not\in F\Rightarrow q\in Y-F $, an $ Ig^* $-open set. Since the ideal $ \sigma $-space is strongly $ Ig^* $-symmetric, $ \{q\} $ is $ Ig^* $-closed and hence there is a closed set $ P $ containing $ \{q\} $ such that $ P-(Y-F)\in I=\{\emptyset\}\Rightarrow P\subset (Y-F) $. Since $ \{q\} $ is $ Ig^* $-closed, $ \{q\}=\overline{\{q\}^{Ig^*}}\subset P\Rightarrow p\in \overline{\{q\}^{Ig^*}}\subset P\subset Y-F $, a contradiction. Hence the result follows.
\end{proof}

\begin{corollary}\label{54}
A strongly $ Ig^*$-symmetric $ Ig^*$-$T_\omega $ ideal $ \sigma $-space is $ Ig^* $-$ T_1 $ if ideal is trivial.

Proof follows from Theorems \ref{43}, \ref{50} and \ref{53}.
\end{corollary}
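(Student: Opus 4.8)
The plan is to obtain the conclusion purely by chaining together the three implications already established, so there is essentially no new computation to perform. I would begin by fixing the standing hypotheses: $(Y,\eta,I)$ is an ideal $\sigma$-space that is strongly $Ig^*$-symmetric, is $Ig^*$-$T_\omega$, and has trivial ideal $I=\{\emptyset\}$. The target is to produce, for an arbitrary pair of distinct points $p,q\in Y$, the two separating $Ig^*$-open sets required by the definition of $Ig^*$-$T_1$ in Definition \ref{28}(II).

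First I would invoke Theorem \ref{43}: since the space is $Ig^*$-$T_\omega$, it is automatically $Ig^*$-$T_0$. This secures the separation-by-one-open-set property. Next I would apply Theorem \ref{53}: because the space is strongly $Ig^*$-symmetric and the ideal is trivial ($I=\{\emptyset\}$, which is exactly the hypothesis of that theorem), the space is $Ig^*$-symmetric. Now both of the hypotheses of Theorem \ref{50} are in hand, namely $Ig^*$-symmetry and the $Ig^*$-$T_0$ axiom.

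Finally I would feed these into Theorem \ref{50}, which states that an $Ig^*$-symmetric and $Ig^*$-$T_0$ ideal $\sigma$-space is $Ig^*$-$T_1$; this immediately yields the desired conclusion. The only point that needs a moment's care is that the trivial-ideal hypothesis $I=\{\emptyset\}$ is genuinely used, and it is consumed precisely at the step invoking Theorem \ref{53} (and is also the setting in which strong $Ig^*$-symmetry interacts correctly with $Ig^*$-closedness via Theorem \ref{14}); Theorems \ref{43} and \ref{50} hold without that restriction. I do not anticipate any real obstacle here, since the statement is a formal composition of three prior results; the main task is simply to verify that the hypotheses line up correctly at each application and that no intermediate property (such as $Ig^*$-symmetry) is asserted before it has actually been derived.
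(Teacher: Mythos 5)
Your proposal is correct and follows exactly the route the paper intends: it chains Theorem \ref{43} ($Ig^*$-$T_\omega \Rightarrow Ig^*$-$T_0$), Theorem \ref{53} (strong $Ig^*$-symmetry plus $I=\{\emptyset\}$ $\Rightarrow$ $Ig^*$-symmetry), and Theorem \ref{50} ($Ig^*$-symmetric $+$ $Ig^*$-$T_0$ $\Rightarrow$ $Ig^*$-$T_1$), which is precisely the composition the paper cites as its proof. Your added observation that the trivial-ideal hypothesis is consumed only at the application of Theorem \ref{53} is a correct and slightly more explicit accounting than the paper gives.
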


\begin{remark}
In view of above discussion the following implications hold: $ Ig^*$-$T_\omega\Longrightarrow Ig^*$-$T_0\Longrightarrow Ig^*$-$T_1 $ in a strongly $ Ig^*$-symmetric ideal $ \sigma $-space $ (Y,\eta,I) $ with $ I=\{\emptyset\} $.  
\end{remark}

\end{document}